\renewcommand{\deg}{\operatorname{deg}}
\newcommand{\T}{\mathcal{T}}
\theoremstyle{plain}
\newtheorem{theorem}{Theorem}
\newtheorem*{theorem*}{Theorem}
\newtheorem{lemma}{Lemma}
\newtheorem*{proposition*}{Proposition}
\newtheorem*{corollary*}{Corollary}
\newtheorem{ctc}{Conway's Thrackle Conjecture}
\theoremstyle{remark}
\newtheorem*{remark*}{Remark}
\author{Grace Misereh\affiliationmark{1} \and Yuri Nikolayevsky}
\title[Annular and pants thrackles]{Annular and pants thrackles}
\affiliation{Department of Mathematics and Statistics, La Trobe University, Melbourne, Australia}
\keywords{thrackle, Thrackle Conjecture}
\begin{document}
\publicationdetails{20}{2018}{1}{16}{3883}
\maketitle
\begin{abstract}
A thrackle is a drawing of a graph in which each pair of edges meets precisely once. Conway's Thrackle Conjecture asserts that a thrackle drawing of a graph on the plane cannot have more edges than vertices. We prove the Conjecture for thrackle drawings all of whose vertices lie on the boundaries of $d \le 3$ connected domains in the complement of the drawing. We also give a detailed description of thrackle drawings corresponding to the cases when $d=2$ (annular thrackles) and $d=3$ (pants thrackles).
\end{abstract}

\section{Introduction}
\label{s:intro}

Let $G$ be a finite simple graph with $n$ vertices and $m$ edges. A \emph{thrackle drawing} of $G$ on the plane is a drawing $\T:G\rightarrow\reals^2$, in which every pair of edges meets precisely once, either at a common vertex or at a point of proper crossing (see \cite{LPS97} for definitions of a drawing of a graph and a proper crossing). The notion of thrackle was introduced in the late sixties by John Conway, in relation with the following conjecture. % need m,n?

\begin{ctc}
For a thrackle drawing of a graph on the plane, one has $m\leq n$.
\end{ctc}

Despite considerable effort \cite{WOO71, LPS97, GY2000, GMY2004, GY2009, PJS2011, FP2011, GY2012, GKY2015, GX2017, FP17, MNajc}, the conjecture remains open. The best known bound for a thrackleable graph with $n$ vertices is $m \le 1.3984 n$ \cite{FP17}. %Fulek and Pach [2],

Adding a point at infinity we can consider a thrackle drawing on the plane as a thrackle drawing on the $2$-sphere $S^2$. The complement of a thrackle drawing on $S^2$ is the disjoint union of open discs. We say that a drawing \emph{belongs to the class $T_d, \; d \ge 1$}, if there exist $d$ open discs $D_1, \dots, D_d$ whose closures are pairwise disjoint such that all the vertices of the drawing lie on the union of their boundaries (a disk may contain no vertices on its boundary). We say that two thrackle drawings of class $T_d$ are \emph{isotopic} if they are isotopic as drawings on $S^2 \setminus (\cup_{k=1}^d D_k)$. We will also occasionally identify a graph $G$ with its thrackle drawing $\T(G)$ speaking, for example, of the vertices and edges of the drawing.

Thrackles of class $T_1$ are called \emph{outerplanar}: all their vertices lie on the boundary of a single disc $D_1$. Such thrackles are very well understood.

\begin{theorem} \label{t:outer}
Suppose a graph $G$ admits an outerplanar thrackle drawing. Then
\begin{enumerate}[{\rm (a)}]
  \item \label{it:outallodd}
  any cycle in $G$ is odd \cite[Theorem~1]{GY2012};

  \item \label{it:{it:outCTC}}
  the number of edges of $G$ does not exceed the number of vertices \cite[Theorem~2]{PJS2011};

  \item \label{it:outRei}
  if $G$ is a cycle, then the drawing is Reidemeister equivalent to a standard odd musquash \cite[Theorem~1]{GY2012}.
\end{enumerate}
\end{theorem}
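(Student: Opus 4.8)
The plan is to work on the sphere and exploit that, for a drawing in the class $T_1$, the open disc $D_1$ meets neither vertices nor edges, so the whole drawing sits inside the closed complementary disc $\Delta = S^2 \setminus D_1$ with all vertices on the circle $\partial\Delta$. Fixing the cyclic order of the vertices around $\partial\Delta$, I would first reduce every parity question to straight chords: an arc in a disc with fixed endpoints on $\partial\Delta$ is isotopic rel endpoints to the straight chord joining them, and such an isotopy changes the crossing number of two arcs only by an even amount. Hence the key tool is the elementary lemma that two chords of a disc cross an odd number of times exactly when their endpoints interleave on the boundary circle, while two arcs sharing an endpoint have even interior crossing number. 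Reading the thrackle axiom through this lemma: two edges of $\T(G)$ sharing no vertex cross exactly once, so their endpoints must interleave on $\partial\Delta$, whereas two edges sharing a vertex meet only there.

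For part (\ref{it:outallodd}), let $C = v_1 v_2 \cdots v_k$ be a cycle and record the cyclic positions of $v_1,\dots,v_k$ on $\partial\Delta$. By the above, after replacing edges by chords, every two non-adjacent sides of the resulting inscribed $k$-gon cross and every two adjacent sides do not, and I would show that such a configuration forces $k$ odd. The most robust route is to propagate the interleaving constraints: fixing a vertex with its two cycle-neighbours and repeatedly imposing that each further side interleave all non-incident sides, one finds the vertices forced into an arithmetic pattern around the circle, which closes up into a single cycle only for odd $k$. (A quicker heuristic compares the total crossing number $\binom{k}{2}-k$ modulo $2$ with the turning-number parity of a closed curve via Whitney's formula, but this needs control of the turning number.) The propagation step also produces the structure needed for part (\ref{it:outRei}).

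For part (\ref{it:outRei}) I would upgrade the combinatorial constraint into rigidity: a Hamiltonian cycle on $k$ points of a circle, all of whose non-adjacent chords cross, must, after relabelling the points in their circular order $1,\dots,k$, send $i \mapsto i + d \pmod{k}$ with $2d \equiv \pm 1 \pmod{k}$. This is exactly the star polygon $\{k/d\}$, which is a single cycle only when $k$ is odd and $d=(k-1)/2$, i.e.\ the standard odd musquash. Once the cyclic vertex order and the crossing pattern are pinned down, the given drawing and the standard musquash realise the same abstract crossing structure; the remaining freedom is the winding of individual arcs inside $\Delta$ and around the point at infinity, which is precisely what Reidemeister moves absorb, yielding the asserted Reidemeister equivalence.

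For part (\ref{it:{it:outCTC}}) I would first invoke (\ref{it:outallodd}): a graph with no even cycle has every block equal to a single edge or an odd cycle, since a $2$-connected graph other than a cycle contains a theta subgraph and hence an even cycle. The bound $m \le n$ is then equivalent to each connected component containing at most one cycle, so the crux is to exclude two distinct cycles in one component, such as two odd cycles sharing a vertex or joined by a path. I expect this, rather than the parity in (\ref{it:outallodd}), to be the main obstacle: it requires showing that the rigid star-polygon interleaving pattern of one outerplanar-thrackled odd cycle leaves no room on $\partial\Delta$ to attach a second independent cycle while keeping every pair of edges meeting exactly once. I would settle it by analysing how the vertices and edges of the putative second cycle must interleave with the fixed pattern of the first, deriving a contradiction from the accumulated crossing constraints.
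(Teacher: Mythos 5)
You should first note that the paper does not prove Theorem~\ref{t:outer} at all: it is a survey statement, with parts \eqref{it:outallodd} and \eqref{it:outRei} quoted from \cite[Theorem~1]{GY2012} and part (b) from \cite[Theorem~2]{PJS2011}. So your proposal has to stand as a self-contained argument, and as such it has genuine gaps precisely where the real work lies. Your entry point is sound: crossing parity of two arcs with distinct fixed endpoints in a disc is an isotopy invariant, so in an outerplanar thrackle every pair of non-adjacent edges must have interleaving endpoints on the boundary circle, and classifying inscribed closed polygons all of whose non-adjacent sides cross is the right combinatorial core; pinning that down to the star polygon $\{k/d\}$ with $2d\equiv\pm1\pmod{k}$ (hence $k$ odd) is a finite, completable induction, even though your sketch stops at ``propagation forces an arithmetic pattern.'' (A small inaccuracy: for two arcs \emph{sharing} an endpoint the interior crossing parity is \emph{not} an invariant of the endpoints alone---it depends on the departure side at the common vertex---but since you derive no constraint from adjacent pairs, this is harmless.) The serious gap is in part \eqref{it:outRei}: the sentence ``the remaining freedom is the winding of individual arcs \dots which is precisely what Reidemeister moves absorb'' is a restatement of the claim, not a proof. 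What you actually need is that two arc systems in a disc with the same endpoints, both pairwise in minimal position (non-adjacent pairs crossing once, adjacent pairs not at all), are related by isotopy together with Reidemeister moves of the third kind. This uniqueness-of-minimal-position statement (a Hass--Scott type theorem) is comparable in depth to the cited classification itself and cannot be taken for granted.

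The second genuine gap is part (b), which you yourself identify as the main obstacle and then leave entirely open: ``I would settle it by analysing how the \dots second cycle must interleave'' is a plan, not an argument. Moreover, a bespoke interleaving analysis is not needed, because the tools quoted in this very paper close the case immediately once part \eqref{it:outallodd} is known. By your block observation, if every cycle is odd then every block is an edge or an odd cycle, so $m>n$ would force two odd cycle blocks in one component, i.e.\ either a dumbbell or a figure-eight of odd cycles. A dumbbell contains two vertex-disjoint odd cycles, which no thrackleable graph (outerplanar or not) can contain by \cite[Lemma~2.1]{LPS97}, the lemma invoked after Lemma~\ref{l:TCTd}; and a figure-eight reduces to a dumbbell by splitting the degree-four vertex while preserving thrackleability, as in Figure~\ref{figure:splitting}. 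This derivation of part (b) from part \eqref{it:outallodd} avoids any further outerplanar-specific geometry and is the route you should take in place of the deferred analysis.
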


We say that two thrackle drawings are \emph{Reidemeister equivalent} (or \emph{equivalent up to Reidemeister moves}), if they can be obtained from one another by a finite sequence of Reidemeister moves of the third kind in the complement of vertices (see Section~\ref{ss:R}).

A \emph{standard odd musquash} is the simplest example of a thrackled cycle: for $n$ odd, distribute $n$ vertices evenly on a circle and then join by an edge every pair of vertices at the maximal distance from each other. This defines a musquash in the sense of Woodall \cite{WOO71}: an \emph{$n$-gonal musquash} is a thrackled $n$-cycle whose successive edges $e_0,\dots,e_{n-1}$ intersect in the following manner: if the edge $e_0$ intersects the edges $e_{k_1},\dots,e_{k_{n-3}}$ in that order, then for all $j=1,\dots,n-1$, the edge $e_j$ intersects the edges $e_{k_1+j},\dots,e_{k_{n-3}+j}$ in that order, where the edge subscripts are computed modulo $n$. A complete classification of musquashes was obtained in  \cite{GD1999,GD2001}: every musquash is either isotopic to a standard $n$-musquash, or is a thrackled six-cycle.

In this paper, we study thrackle drawings of the next two classes $T_d$: annular thrackles and pants thrackles.

A thrackle drawing of class $T_2$ is called \emph{annular}. Up to isotopy, we can assume that the boundaries of $D_1$ and $D_2$ are two concentric circles on the plane, and that the thrackle drawing, except for the vertices, entirely lies in the open annulus bounded by these circles. Clearly, any outerplanar drawing can be viewed as an annular drawing. Figure~\ref{figure:annulus} shows an example of an annular thrackle drawing which is not outerplanar. Note however that the underlying graph has some vertices of degree $1$ (which must always be the case by Theorem~\ref{t:ann}\eqref{it:annout} below). %of this drawing  (so that on the plane, $D_1$ is unbounded)
\begin{figure}[h]
\centering
\begin{tikzpicture}[scale=0.5,>=triangle 45]
\draw[thick]  (0,0) circle (4); \draw[thick] (0,0) circle (1); % annulus
\coordinate (A3) at (0,-1);
\coordinate (A5) at (-1,0);
\coordinate (A2) at ({4*cos((pi/6) r)},{4*sin((pi/6) r)});
\coordinate (A1) at ({4*cos((5*pi/6) r)},{4*sin((5*pi/6) r)});
\coordinate (A4) at ({4*cos((11*pi/12) r)},{4*sin((11*pi/12) r)});
\coordinate (A6) at ({4*cos((pi/4) r)},{4*sin((pi/4) r)});
\fill (A1) circle (6.0pt);
\fill (A2) circle (6.0pt);
\fill (A3) circle (6.0pt);
\fill (A4) circle (6.0pt);
\fill (A5) circle (6.0pt);
\fill (A6) circle (6.0pt);
\draw [very thick] (A1) to [out=-100,in=-100] (A3) to [out=-80,in=-80] (A2) to (A1);
\draw [very thick] (A2)--(A4);
\draw [very thick] (A6) to [out=-90,in=0] (0,-2.5) to [out=180,in=-110] (A5);
\draw [very thick] (A2) to [out=-80,in=0] (0,-3.5) to [out=180,in=-120] (A5);
\end{tikzpicture}
\caption{An annular thrackle drawing.}
\label{figure:annulus}
\end{figure}
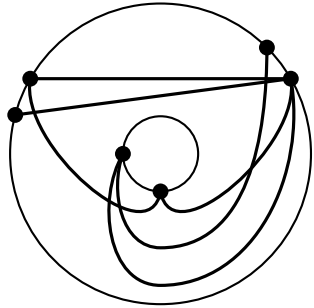

We show that the three assertions of Theorem~\ref{t:outer} also hold for annular drawings.

\begin{theorem} \label{t:ann}
Suppose a graph $G$ admits an annular thrackle drawing. Then
\begin{enumerate}[{\rm (a)}]
  \item \label{it:annodd}
  any cycle in $G$ is odd;

  \item \label{it:annC}
  the number of edges of $G$ does not exceed the number of vertices;

  \item \label{it:annout}
  if $G$ is a cycle, then the drawing is, in fact, outerplanar \emph{(}and as such, is Reidemeister equivalent to a standard odd musquash\emph{)}.
\end{enumerate}
\end{theorem}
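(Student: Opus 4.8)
The plan is to take assertion (c) as the technical heart, to deduce (a) from it, and to settle (b) through a short list of forbidden configurations. Throughout I would use two tools. First, the universal cover of the open annulus is an infinite strip; since the strip is a disc, the parity of the number of crossings of two arcs is governed solely by whether their endpoints interleave on the boundary, so lifting the drawing turns the thrackle condition into a system of interleaving conditions on the two boundary lines. Second, any two closed curves on $S^2$ meet in an even number of points. I also use that a sub-drawing of a thrackle is again a thrackle, so that each cycle $C$ of $G$ inherits an annular thrackle drawing on the same two circles.

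Granting (c), assertion (a) is immediate: any cycle $C\subseteq G$ carries an annular thrackle drawing, which by (c) is outerplanar, so $C$ is odd by the first assertion of Theorem~\ref{t:outer}. Everything about cycles thus rests on (c). To prove (c), fix a thrackle drawing of a cycle $\gamma$ and separate its vertices into those on the inner and those on the outer circle. If one circle carries no vertex, then all vertices lie on the other circle, and the disc bounded by that circle on the side away from the annulus has every vertex on its boundary, so the drawing lies in class $T_1$. The substance is the case in which both circles carry vertices. Here I would pass to the strip, record for each non-adjacent pair of edges the unique deck translate in which their lifts cross, and argue that these interleavings cannot be realized while $\gamma$ both meets both circles and wraps essentially around the hole; the goal is to show that a single face of the drawing abuts both boundary circles and therefore has every vertex on its boundary. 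This exhibits the drawing as outerplanar, and the last assertion of Theorem~\ref{t:outer} then identifies it, up to Reidemeister moves, with a standard odd musquash.

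For (b) it suffices to prove that each connected component of $G$ contains at most one cycle, since then $m\le n$. A component with two independent cycles contains one of three configurations: a theta-subgraph, two vertex-disjoint cycles, or two cycles sharing exactly one vertex. The first is excluded by parity, for the three cycles of a theta have lengths summing to an even number and so cannot all be odd, contradicting (a). The second is excluded by a crossing count: two vertex-disjoint cycles of odd lengths $a$ and $b$ have all $ab$ of their edge-pairs non-adjacent, so the thrackle condition forces the two closed curves to cross exactly $ab$ times, an odd number, contradicting the even intersection of closed curves on $S^2$.

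The remaining configuration---two odd cycles meeting in a single vertex---is exactly where both parity counts come out consistent, and it is the main obstacle. Ruling it out should require the finer output of (c), namely that each of the two cycles is individually outerplanar, together with the alternation of the four edges forced around the shared vertex, in order to locate a pair of edges that either fail to meet or meet twice. I expect the two genuinely new difficulties, beyond the outerplanar theory, to be precisely this figure-eight case in (b) and the confinement step in (c), both of which are where the annular topology, as opposed to that of the disc, must really be controlled.
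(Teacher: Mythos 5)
Your plan leaves unproven exactly the two steps that carry all the content, and both are places where the paper's machinery is genuinely needed. For (c), you reduce everything to the ``confinement step'' (showing some face of the drawing abuts both circles), but you give no argument for it --- only the statement that the interleaving data in the universal cover ``cannot be realized''. The paper does not prove (c) this way at all: it first proves (a) directly, by an edge-removal induction (Lemma~\ref{l:3cycle}: an \emph{irreducible} annular thrackled cycle is a triangle, proved with Lemma~\ref{lemma:triangle} and the word calculus of Lemma~\ref{l:repeatsgen}), and only then proves (c) for \emph{odd} cycles, by showing every annular thrackled cycle is \emph{alternating} (a case analysis on the subwords $aba$, $bab$, $a^3$, $aab$, with each bad configuration killed by an edge removal producing a shorter counterexample) and invoking \cite[Theorem~2]{GY2012}, which says alternating thrackled cycles are outerplanar. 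Note the direction of dependence: the paper's proof of (c) uses (a) (via Lemma~\ref{l:alt}, whose proof needs $n$ odd), whereas you propose to deduce (a) from (c); so to execute your plan you would have to prove the confinement statement with no parity information available, and you have not indicated how that would go.

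For (b), your reduction to one cycle per component is fine for the theta case (parity of the three cycle lengths) and for two vertex-disjoint cycles (the odd crossing count, which is \cite[Lemma~2.1]{LPS97}), but you concede you cannot handle the figure-eight case. The paper's point is that one never needs to: by Lemma~\ref{l:TCTd}, splitting the degree-$4$ vertex of a thrackled figure-eight (as in Figure~\ref{figure:splitting}) yields a thrackled dumbbell \emph{in the same class} $T_d$, and a dumbbell contains an even cycle (again by \cite[Lemma~2.1]{LPS97}), contradicting (a). So (b) follows from (a) plus this vertex-splitting observation, with no direct analysis of a figure-eight required. As it stands, your proposal is a correct outline of the easy implications together with an honest list of the two hard steps, but neither hard step is done, so the argument has genuine gaps precisely where the theorem's difficulty lies.
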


We next proceed to the thrackle drawings of class $T_3$. We call such drawings \emph{pants thrackle drawings} or \emph{pants thrackles}.

Any annular thrackle drawing is trivially a pants thrackle drawing. The pants thrackle drawing of a six-cycle in Figure~\ref{figure:sixcycle} is not annular.
\begin{figure}[h]
\centering
\begin{tikzpicture}[scale=0.6,>=triangle 45]
\draw[thick]  (0,0) ellipse (5 and 3); \draw[thick]  (-2,0) circle (1); \draw[thick]  (2,0) circle (1);
\coordinate (A1) at ({5*cos((10*pi/9) r)},{3*sin((10*pi/9) r)});
\coordinate (A2) at ({5*cos((8*pi/9) r)},{3*sin((8*pi/9) r)});
\coordinate (B1) at ({-2+cos((-pi/9) r)},{sin((-pi/9) r)});
\coordinate (B2) at ({-2+cos((pi/9) r)},{sin((pi/9) r)});
\coordinate (C2) at ({2+cos((pi/9) r)},{sin((pi/9) r)});
\coordinate (C1) at ({2+cos((-pi/9) r)},{sin((-pi/9) r)});
\foreach \x in {A1,A2,B1,B2,C1,C2} {\fill (\x) circle (5.0pt);}
\draw [very thick] (A1) to[out=-30,in=-150] (-1.5,-2) to[out=30,in=-30] (B2);
\draw [very thick] (A2) to[out=30,in=150] (-1.5,2) to[out=-30,in=30] (B1);
\draw [very thick] (B1) to[out=-30,in=-150] (2.5,-1.5) to[out=30,in=-30] (C2);
\draw [very thick] (B2) to[out=30,in=150] (2.5,1.5) to[out=-30,in=30] (C1);
\draw [very thick] (A1) to[out=60,in=180] (1,2.5) to[out=0,in=60] (C2);
\draw [very thick] (A2) to[out=-60,in=180] (1,-2.5) to[out=0,in=-60] (C1);
\end{tikzpicture}
\caption{Pants thrackle drawing of a six-cycle.}
\label{figure:sixcycle}
\end{figure}
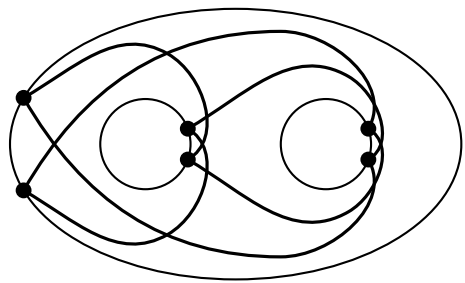

\medskip

We prove the following.

\begin{theorem} \label{t:pants}
Suppose a graph $G$ admits a pants thrackle drawing. Then
\begin{enumerate}[{\rm (a)}]
  \item \label{it:pantseven}
  any even cycle in $G$ is a six-cycle, and its drawing is Reidemeister equivalent to the one in Figure~\ref{figure:sixcycle};

  \item \label{it:pantsodd}
  if $G$ is an odd cycle, then the drawing can be obtained from a pants drawing of a three-cycle by a sequence of edge insertions;

  \item \label{it:pantsC}
  the number of edges of $G$ does not exceed the number of vertices.
\end{enumerate}
\end{theorem}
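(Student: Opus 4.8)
The plan is to work throughout in the pair-of-pants surface $P = S^2 \setminus (D_1 \cup D_2 \cup D_3)$, in which the drawing (apart from its vertices, which sit on $\partial D_1, \partial D_2, \partial D_3$) is determined up to isotopy. Each edge is then an arc of $P$ joining two boundary points, and up to isotopy rel endpoints such an arc is specified by the pair of boundary circles it joins together with how it winds around the third hole. The first move is a reduction: if the drawing fails to use all three boundary circles in an essential way, so that one of the $\partial D_k$ can be capped off without merging any crossings, then the drawing is annular (or outerplanar) and all three conclusions follow at once from Theorem~\ref{t:ann}. Hence I may assume the drawing genuinely separates the three holes, and the thrackle condition ``every two edges meet exactly once'' becomes a constraint on the geometric intersection numbers of these arcs.

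For the two cycle statements \eqref{it:pantseven} and \eqref{it:pantsodd}, let $C = e_0 e_1 \cdots e_{\ell-1}$ be a cycle and record the cyclic sequence of boundary circles it visits together with the homotopy class of each edge. Closing up $C$ produces a closed curve in $P$, and its class, read off in $\pi_1(P) = \langle x, y \rangle$, is tightly controlled by the requirement that consecutive edges do not cross while every non-adjacent pair crosses exactly once; this is the pants analogue of the winding-number argument that forces annular cycles to be odd (Theorem~\ref{t:ann}\eqref{it:annodd}). For an even cycle I would show that the only winding data consistent with single pairwise crossings is the alternating pattern (outer, left, right, outer, left, right), which forces $\ell = 6$ and, after normalising the windings by Reidemeister moves of the third kind, reproduces exactly the drawing of Figure~\ref{figure:sixcycle}; this gives \eqref{it:pantseven}. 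For an odd cycle, the same bookkeeping should exhibit a degree-two vertex whose two incident edges can be amalgamated, the inverse of an edge insertion, yielding a shorter odd-cycle pants thrackle, so that \eqref{it:pantsodd} follows by induction with the pants drawing of a three-cycle as base case.

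To deduce the edge bound \eqref{it:pantsC} I would reduce it to a purely combinatorial statement: since a connected graph on $n$ vertices with $m$ edges satisfies $m \le n$ exactly when its cycle rank $m - n + 1$ is at most $1$, it suffices to prove that no connected component of a pants-thrackleable graph contains two independent cycles. A component of cycle rank at least $2$ contains a subdivision of one of the three minimal graphs of cycle rank two, namely the theta graph, the handcuff (two cycles joined by a path), or the figure-eight (two cycles meeting at a vertex). In a theta graph whose three constituent paths have lengths $a, b, c$ the three cycles have lengths $a+b$, $b+c$, $c+a$, whose parities cannot all be odd; hence one of them is an even cycle, which by \eqref{it:pantseven} must be the rigid six-cycle of Figure~\ref{figure:sixcycle}. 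One then checks that this forced six-cycle configuration leaves no room to route the remaining path of the theta graph while preserving single pairwise crossings, a contradiction; the handcuff and figure-eight cases are to be treated by the same surface analysis applied to the two cycles and their joining path.

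I expect the main obstacle to be twofold. First, the even-cycle rigidity in \eqref{it:pantseven}: proving that an even cycle must be precisely the six-cycle, and that its drawing is Reidemeister-unique, requires controlling every admissible winding pattern and showing that any length exceeding six, or any deviation from the alternating pattern, forces either a missing or a repeated crossing. Second, within \eqref{it:pantsC} the genuinely new case compared with the annular setting is the figure-eight, where the parity argument that disposes of the theta graph gives no information, since two odd cycles may meet at a vertex with entirely consistent parities; this case must therefore be settled by a direct homotopy-and-intersection analysis of the two cycles' arcs in the pants, which I anticipate will be the most delicate part of the argument.
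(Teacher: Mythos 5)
There is a genuine gap, and it sits exactly where you predict: part~\eqref{it:pantsC}. You correctly observe that parity disposes of the theta graph but says nothing about a figure-eight (or a handcuff with two odd cycles), and your plan for those cases is only the promise of ``a direct homotopy-and-intersection analysis.'' The paper closes this with two tools absent from your proposal: first, a degree-$4$ vertex can be split \emph{within the class $T_3$} (Figure~\ref{figure:splitting}), so a pants-thrackled figure-eight would yield a pants-thrackled dumbbell (Lemma~\ref{l:TCTd}); second, a thrackleable graph contains no two vertex-disjoint odd cycles \cite[Lemma~2.1]{LPS97}, so that dumbbell necessarily contains an \emph{even} cycle, which by part~\eqref{it:pantseven} must be the rigid six-cycle of Figure~\ref{figure:sixcycle}. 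Only then does a finite check remain: a six-cycle with a short path or small tree attached (Figures~\ref{figure:sixtree}--\ref{figure:sixplustwov}) admits no completion to a theta graph or dumbbell. Without vertex splitting and the disjoint-odd-cycles lemma, your treatment of the case you yourself call the most delicate has no content.

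The second gap concerns \eqref{it:pantseven} and \eqref{it:pantsodd}. The paper's engine is reducibility: edge removal is possible precisely when the triangular domain $\triangle$ contains no vertices (Lemma~\ref{lemma:triangle}), and a word-plus-turning-direction analysis shows that an \emph{irreducible} pants-thrackled cycle is a three-cycle or the six-cycle (the Proposition in Section~\ref{s:pants}). This gives \eqref{it:pantsodd} at once, and reduces \eqref{it:pantseven} to showing that no edge insertion on the six-cycle stays pants, which the paper settles by enumerating all thrackled eight-cycles (Figure~\ref{figure:alleight}) and checking none is pants. Your proposal replaces all of this with the single claim that the only winding data consistent with pairwise single crossings of an even cycle is the alternating pattern of length six. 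That claim \emph{is} the theorem, not a step toward it: for a $12$-cycle whose cyclic word is $(abc)^4$, nothing in your sketch produces the missing or doubled crossing, and you have no mechanism playing the role of irreducibility, nor anything addressing the eight-cycle insertion case. A smaller but real inaccuracy: the inverse of edge insertion removes a three-path (two vertices and two edges), not a single degree-two vertex; amalgamating the two edges at one vertex would change the parity of the cycle length and is not a thrackle-preserving operation, so your induction for \eqref{it:pantsodd} is stated with the wrong move even though the intended statement matches the paper's.
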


The procedure of edge insertion replaces an edge in a thrackle drawing by a three-path such that the resulting drawing is again a thrackle -- see Section~\ref{ss:ir} for details.

The ideas of the proofs are roughly as follows. There is a toolbox of operations one can do on a thrackled graph while preserving thrackleability and that have been used in the past literature on thrackles; these include edge insertion, edge removal and vertex splitting. We investigate how these operations interact with the more restrictive annular or pants conditions. One key observation (Lemma~\ref{lemma:triangle} below) is that, in order to preserve thrackleability, edge removal hinges on some empty triangle condition which blends well with the annular or the pants structure. This allows the study of
irreducible thrackles, which are those for which no edge removal is possible. We prove that irreducible thrackled cycles are either triangles, or, in the case of pants drawing, a six-cycle.

% Organised as follows ?

\section{Thrackle operations}
\label{s:pre}

\subsection{Edge insertion and edge removal}
\label{ss:ir}

The operation of edge insertion was introduced in \cite[Figure~14]{WOO71}; given a thrackle drawing, one replaces an edge by a three-path in such a way that the resulting drawing is again a thrackle. All the changes to the drawing are performed in a small neighbourhood of the edge, as shown in Figure~\ref{figure:insertion}.
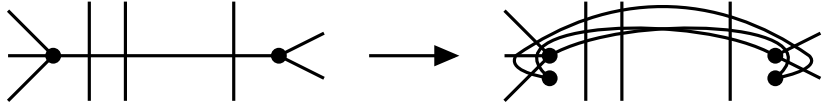
\begin{figure}[h]
\centering
\begin{tikzpicture}[scale=0.6,>=triangle 45]
\foreach \x in {0, 11}
{
\draw[very thick] (\x-1,1)-- (\x,0);
\draw[very thick] (\x-1,-1)-- (\x,0);
\draw[very thick] (\x-1,0)-- (\x,0);
\draw[very thick] (\x+6,0.5)-- (\x+5,0);
\draw[very thick] (\x+6,-0.5)-- (\x+5,0);
\fill (\x,0) circle (5.0pt); \fill (\x+5,0) circle (5.0pt);
\draw[very thick] (\x+0.8,-1) -- (\x+0.8,1.2);
\draw[very thick] (\x+1.6,-1) -- (\x+1.6,1.2);
\draw[very thick] (\x+4,-1) -- (\x+4,1.2);
}
\draw[->, very thick] (7,0) -- (9,0);
\draw[very thick] (0,0) -- (5,0);
\draw [very thick] (11,0) to[out=30,in=40] (16,-0.5);
\draw [very thick] (11,-0.5) to[out=140,in=150] (16,0);
\draw [very thick] (11,-0.5) to [out=170,in=-120] (10.25,0) to [out=35, in=145] (16.75,0) to [out=-45,in=10] (16,-0.5);
\fill (11,-0.5) circle (5.0pt); \fill (16,-0.5) circle (5.0pt);
\end{tikzpicture}
\caption{The edge insertion operation.}
\label{figure:insertion}
\end{figure}
Edge insertion on a given edge is not uniquely defined, even up to isotopy and Reidemeister moves, as we can choose one of two different orientations of the crossing of the first and the third edge of the three-path by which we replace the edge. We want to formalise and slightly modify the edge insertion procedure. Given an edge $e=uv$, on the first step, we remove from it a small segment $Q_1Q_2$ lying in the interior of $e$ and containing no crossings with other edges. On the second step, we slightly extend the segments $uQ_1$ and $Q_2v$ so that they cross (with one of two possible orientations), and then further extend each of them to cross other edges in such a way that the resulting drawing is again a thrackle. On the third step, we join the two endpoints of degree $1$ of the two edges obtained at the first step so that the resulting drawing is again a thrackle. We make two observations regarding this process of edge insertion. First, it may happen that we change the drawing not only in a small neighbourhood of $e$, but also ``far away" from it. Figure~\ref{figure:twosevens} shows two Reidemeister inequivalent thrackled seven-cycles obtained from the standard $5$-musquash by edge insertion. Note that the orientations of all the crossings in the two thrackles are the same (we note in passing, that up to isotopy and Reidemeister moves there exist only three thrackled seven-cycles: the two shown in Figure~\ref{figure:twosevens} and the standard $7$-musquash; we can prove that using the algorithm given in the end of Section~3 of \cite{MNajc}).
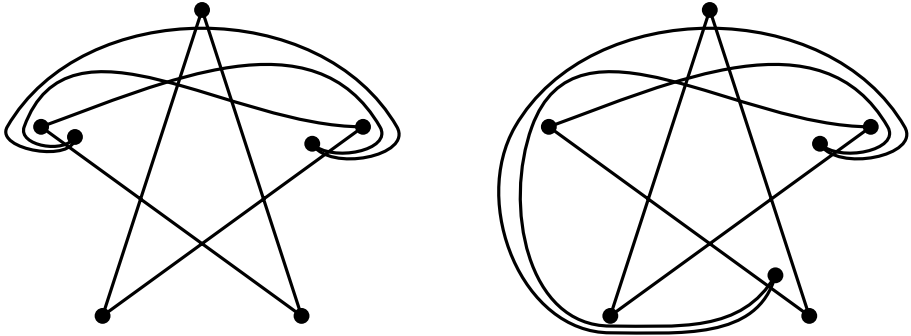
\begin{figure}[h]
\centering
\begin{tikzpicture}[scale=0.45]
\foreach \x in {0,15} {
\coordinate (A0) at (\x,5);
\coordinate (A1) at ({\x + 5*cos((2*pi/5+pi/2) r)},{5*sin((2*pi/5+pi/2) r)});
\coordinate (A2) at ({\x + 5*cos((2*pi*2/5+pi/2) r)},{5*sin((2*pi*2/5+pi/2) r)});
\coordinate (A3) at ({\x + 5*cos((2*pi*3/5+pi/2) r)},{5*sin((2*pi*3/5+pi/2) r)});
\coordinate (A4) at ({\x + 5*cos((2*pi*4/5+pi/2) r)},{5*sin((2*pi*4/5+pi/2) r)});
\coordinate (B1) at ($(A4)+(-1.5,-.5)$);
\foreach \y in {A0,A1,A2,A3,A4,B1} {\fill  (\y) circle (6.67pt);}
\draw[very thick] (A1) -- (A3) -- (A0) -- (A2) -- (A4);
\draw [very thick] (A1) to [out=20,in=120] ($(A4)+(0.5,0)$) to [out=-60,in=-45] (B1);
\ifthenelse{\x = 0}
    {\coordinate (B2) at ($(A1)+(1,-0.3)$); \fill(B2) circle(6.67pt);
    \draw [very thick] (A4) to[out=180,in=70] ($(A1)+(-0.5,0)$) to [out=-110,in=-120] (B2);
    \draw [very thick] (B2) to [out=-90,in=-120] ($(A1)+(-1,0)$) to [out=60,in=120] ($(A4)+(1,0)$) to [out=-60,in=-60] (B1)
    }
    {\coordinate (B2) at ($(A3)+(-1,1.2)$); \fill(B2) circle(6.67pt);
    \draw [very thick] (A4) to[out=180,in=70] ($(A1)+(-0.5,0)$) to [out=-110,in=180] ($(A2)+(0,-0.3)$) to [out=0,in=-120] (B2);
    \draw [very thick] (B2) to [out=-105,in=0] ($(A2)+(0,-0.5)$) to [out=180,in=-120] ($(A1)+(-1,0)$) to [out=60,in=120] ($(A4)+(1,0)$) to [out=-60,in=-60] (B1)
    }
  ;
}
\end{tikzpicture}
\caption{Two seven-cycles obtained by edge insertions on a five-cycle.}
\label{figure:twosevens}
\end{figure}

Our second observation is that edge insertion may not always be possible within the same class $T_d$. For example, in the proof of assertion~\eqref{it:pantseven} of Theorem~\ref{t:pants} in Section~\ref{s:pants}, it will be shown that no edge insertion on the pants thrackle drawing of the six-cycle shown in Figure~\ref{figure:sixcycle} produces a pants thrackle drawing. %We note that out of the two steps involved in the edge insertion procedure, the second one can always be performed within the same class $T_d$, but the first one may not be possible, if the directions of the turns at the endpoints of the edge $e$ are opposite.

The operation of \emph{edge removal} is inverse to the edge insertion operation. Let $\T(G)$ be a thrackle drawing of a graph $G$ and let $v_1v_2v_3v_4$ be a three-path in $G$ such that $\deg v_2 = \deg v_3 = 2$. Let $Q = \T (v_1v_2) \cap \T (v_3v_4)$. Removing the edge $v_2v_3$, together with the segments $Qv_2$ and $Qv_3$ we obtain a drawing of a graph with a single edge $v_1v_4$ in place of the three-path $v_1v_2v_3v_4$ (Figure~\ref{figure:2}).
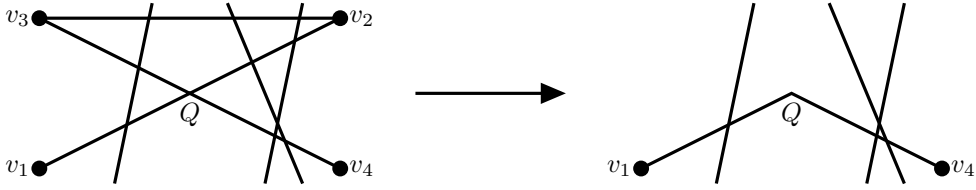
\begin{figure}[h]
\centering
\begin{tikzpicture}[>=triangle 45]
\node[coordinate] (v3) at (-2,2) [label=180:$v_3$] {}; \fill  (v3) circle (3pt);
\node[coordinate] (v2) at (2,2) [label=0:$v_2$] {}; \fill  (v2) circle (3pt);
\node[coordinate] (v1l) at (-2,0) [label=180:$v_1$] {}; \fill  (v1l) circle (3pt);
\node[coordinate] (v4l) at (2,0) [label=0:$v_4$] {}; \fill  (v4l) circle (3pt);
\node[coordinate] (Ql) at (0,1) [label=-90:$Q$] {};
\draw[very thick] (v1l) -- (v2) -- (v3) -- (v4l);
\draw[very thick] (-1,-0.2) -- (-0.5,2.2);
\draw[very thick] (1.5,-0.2) -- (0.5,2.2);
\draw[very thick] (1,-0.2) -- (1.5,2.2);
\draw[->, very thick] (3,1) -- (5,1);
\node[coordinate] (v1r) at (6,0) [label=180:$v_1$] {}; \fill  (v1r) circle (3pt);
\node[coordinate] (v4r) at (10,0) [label=0:$v_4$] {}; \fill  (v4r) circle (3pt);
\node[coordinate] (Qr) at (8,1) [label=-90:$Q$] {};
\draw[very thick] (v1r) -- (Qr) -- (v4r);
\draw[very thick] (7,-0.2) -- (7.5,2.2);
\draw[very thick] (9.5,-0.2) -- (8.5,2.2);
\draw[very thick] (9,-0.2) -- (9.5,2.2);
\end{tikzpicture}
\caption{The edge removal operation.}
\label{figure:2}
\end{figure}
Edge removal does not necessarily result in a thrackle drawing. Consider the triangular domain $\triangle$ bounded by the arcs $v_2v_3, \, Qv_2$ and $v_3Q$ and not containing the vertices $v_1$ and $v_4$ (if we consider the drawing on the plane, $\triangle$ can be unbounded). We have the following lemma.
\begin{lemma}[{\cite[Lemma~3]{GY2012}}]
\label{lemma:triangle}
Edge removal results in a thrackle drawing if and only if $\triangle$ contains no vertices of $\T(G)$.
\end{lemma}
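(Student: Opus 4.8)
The plan is to observe that edge removal alters only the single new edge $e=v_1v_4$, drawn as the concatenation $v_1Q\cup Qv_4$ of the two retained arcs; every other edge, and every crossing not involving the three-path $v_1v_2v_3v_4$, is left untouched. Hence the resulting drawing is a thrackle if and only if $e$ meets each remaining edge $f$ (that is, each edge other than $a=\T(v_1v_2)$, $b=\T(v_2v_3)$, $c=\T(v_3v_4)$) exactly once. The key object is the boundary $\partial\triangle$, a simple closed curve made up of the three \emph{removed} arcs $b$, $Qv_2\subset a$ and $Qv_3\subset c$, while the two \emph{retained} arcs $v_1Q$ and $Qv_4$ together form $e$. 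Since $\deg v_2=\deg v_3=2$, no $f$ is incident to $v_2$ or $v_3$, so $f$ crosses $b$ transversally exactly once and meets each of $a,c$ at most once in their interiors. Throughout I use the Jordan curve theorem for $\partial\triangle$: a curve with both endpoints strictly outside $\triangle$ meets $\partial\triangle$ an even number of times, while one with exactly one endpoint inside meets it an odd number of times (the only special points on $\partial\triangle$ are its corners $v_2,Q,v_3$, which $f$ avoids).

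For the \emph{if} direction, suppose $\triangle$ contains no vertex. Then both endpoints of any $f$, being vertices distinct from the corners $v_2,v_3$, lie strictly outside $\triangle$, so $f$ meets $\partial\triangle$ an even number of times. Discarding the single crossing with $b$, I conclude that $f$ crosses $Qv_2\cup Qv_3$ an odd, hence (being at most two) exactly one, number of times. I then translate this into a statement about $e$: if the unique crossing of $f$ with $a$ falls on the removed arc $Qv_2$, then $f$ does not cross $Qv_3$, so its unique crossing with $c$ falls on the retained arc $Qv_4$ (and symmetrically), whence $f$ meets $e=v_1Q\cup Qv_4$ exactly once. Edges incident to $v_1$ (or $v_4$) need a separate but easy check: such an $f$ meets $a$ (resp.\ $c$) only at the shared vertex, so it has no interior crossing with that edge, and the parity count then forces its single meeting with $e$ to be precisely at $v_1$ (resp.\ $v_4$). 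In every case $e$ meets $f$ exactly once, so the new drawing is a thrackle.

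For the \emph{only if} direction I argue by contraposition: suppose a vertex $p$ lies in the interior of $\triangle$. It suffices to exhibit an edge $f$ with exactly one endpoint inside $\triangle$, for then the parity is reversed: $f$ meets $\partial\triangle$ oddly, so after discarding its crossing with $b$ it crosses $Qv_2\cup Qv_3$ an even number of times, namely zero or two. The same translation as above then shows that $f$ meets $e$ either not at all or twice, never exactly once (when $f$ is incident to $v_1$ or $v_4$ the shared endpoint supplies one meeting and the parity forces a second, again giving two). Either way $e$ and $f$ violate the thrackle condition. To produce such an $f$ I use that the drawings under reduction are connected (for instance cycles, the only case needed): any path in $G$ from $p$ to the exterior vertex $v_1$ contains an edge with one endpoint inside and one outside $\triangle$, and since $p\notin\{v_1,v_2,v_3,v_4\}$ this edge is none of $a,b,c$. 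This completes the contrapositive.

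The main obstacle is not the topology, which reduces to a clean parity count against the simple closed curve $\partial\triangle$, but the bookkeeping that converts crossings with the removed arcs $Qv_2,Qv_3$ into crossings with the retained edge $e$, together with the boundary cases of edges incident to $v_1$ or $v_4$, where one of the two ``meetings'' with $e$ is the shared endpoint rather than a proper crossing. The connectedness used in the converse is mild and holds in all our applications; it is genuinely needed, since a subdrawing lying entirely inside $\triangle$ would survive removal without creating any new incidence with $e$, so the emptiness of $\triangle$ is precisely what excludes such hidden configurations.
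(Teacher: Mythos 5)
The first thing to say is that the paper itself contains no proof of this lemma: it is imported wholesale from \cite{GY2012} (Lemma~3 there), so there is no in-paper argument to compare yours against, and I can only judge your proof on its own terms. On those terms it is correct and is the natural argument: reducing the thrackle property of the new edge $e=v_1Q\cup Qv_4$ to a parity count of intersections with the simple closed curve $\partial\triangle=b\cup Qv_2\cup Qv_3$ works exactly as you describe, in both directions, including the boundary cases of edges incident to $v_1$ or $v_4$. Two small points of hygiene: an edge incident to \emph{both} $v_1$ and $v_4$ cannot occur (it would close a thrackled four-cycle $v_1v_2v_3v_4$, which does not exist), and in the converse your phrase ``any path from $p$ to $v_1$ contains an edge with one endpoint inside and one outside'' needs the observation that such a path cannot enter or leave $\triangle$ through the boundary vertices $v_2,v_3$, since their only edges are $a,b,c$, none of which has an endpoint strictly inside $\triangle$; your degree-$2$ hypothesis supplies exactly this.

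The connectivity issue you flag deserves emphasis, because it is not merely a convenience: the lemma as literally stated, for an arbitrary (possibly disconnected) $G$, is \emph{false}, and your proof of the ``only if'' direction could not succeed without some such hypothesis. Indeed, take the thrackled three-path $a\cup b\cup c$ alone and add a disjoint edge $pq$ with both endpoints inside $\triangle$, drawn so that it crosses $b$ once, crosses $a$ on the removed arc $Qv_2$, and crosses $c$ on the retained arc $Qv_4$; this is a legitimate thrackle drawing of the disconnected graph, and edge removal still yields a thrackle even though $\triangle$ contains vertices. All of the paper's substantive applications (reducibility of thrackled cycles, theta-graphs, dumbbells) concern connected drawings, so nothing downstream is affected, but strictly speaking you have proved the corrected statement rather than the stated one --- which is the right thing to have done. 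One genuine slip in your closing remark, though: it is not true that a configuration hidden in $\triangle$ ``survives removal without creating any new incidence with $e$.'' No edge can lie entirely inside $\triangle$ (it would fail to cross $b$), and an edge with \emph{both} endpoints inside $\triangle$ meets $e$ \emph{exactly once} by your own parity count (even intersection number with $\partial\triangle$, one crossing with $b$, hence exactly one crossing on the removed arcs and the complementary crossing on a retained arc). That --- compatibility with the thrackle condition, rather than absence of incidences --- is precisely why whole components inside $\triangle$ do not obstruct reducibility, and hence why the converse genuinely requires connectivity.
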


Note that for a thrackle drawing of class $T_d$, the condition of Lemma~\ref{lemma:triangle} is satisfied if $\triangle$ contains none of the $d$ circles bounding the discs $D_k$. Given a thrackle drawing of class $T_d$ of an $n$-cycle, edge removal, if it is possible, produces a thrackle drawing of the same class $T_d$ of an $(n-2)$-cycle.

We call a thrackle drawing \emph{irreducible} if it admits no edge removals and \emph{reducible} otherwise.

% n at the beginning remove? n $n$-
To a path in a thrackle drawing of class $T_d$ we can associate a word $W$ in the alphabet $X=\{x_1, \dots, x_d\}$ in such a way that the $i$-th letter of $W$ is $x_k$ if the $i$-th vertex of the path lies on the boundary of the disc $D_k$. For a thrackled cycle, we consider the associated word $W$ to be a cyclic word. For a word $w$ and an integer $m$ we denote $w^m$ the word obtained by $m$ consecutive repetitions of $w$. We have the following simple observation.
\begin{lemma} \label{l:repeatsgen}
For a thrackle drawing of a graph $G$ of class $T_d$,
\begin{enumerate}[{\rm (a)}]
  \item \label{it:noaabbgen}
  For no two different $i, j = 1, \dots, d$, may a thrackle drawing of class $T_d$ contain two edges with the words $x_i^2$ and $x_j^2$.

  \item \label{it:noaaagen}
  Suppose that for some $i = 1, \dots, d$, a thrackle drawing of class $T_d$ contains a two-path with the word $x_i^3$ the first two vertices of which have degree $2$. Then the drawing is reducible.
\end{enumerate} %cycle x^2 then odd
\end{lemma}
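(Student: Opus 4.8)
The plan is to treat both parts with the same device: an edge whose two endpoints lie on a common boundary circle $C_k:=\partial D_k$ can be closed up into a simple closed curve by adjoining a chord inside the disc $D_k$, after which crossing parities are read off from the Jordan curve theorem. For part~\eqref{it:noaabbgen}, suppose to the contrary that the drawing contains an edge $e_i$ with word $x_i^2$ and an edge $e_j$ with word $x_j^2$, with $i\ne j$. Since $\overline{D_i}\cap\overline{D_j}=\emptyset$, the edges $e_i$ and $e_j$ have no common endpoint, so in a thrackle they cross exactly once. I would join the two endpoints of $e_i$ (both on $C_i$) by a simple arc $\chi\subseteq D_i$, obtaining a simple closed curve $\widehat{e}_i=e_i\cup\chi$. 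Because $\chi$ lies in the open disc $D_i$ while the drawing, apart from its vertices, lies in $\mathcal S:=S^2\setminus\bigcup_k D_k$, the curve $\widehat{e}_i$ is disjoint from $\overline{D_j}$; hence $\overline{D_j}$, and with it both endpoints of $e_j$, lie in a single component of $S^2\setminus\widehat{e}_i$, so $e_j$ meets $\widehat{e}_i$ an even number of times. Since $e_j$ never enters the open disc $D_i$ it cannot meet $\chi$, so all these crossings lie on $e_i$; thus $e_i$ and $e_j$ cross an even number of times, contradicting that they cross exactly once.

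For part~\eqref{it:noaaagen}, write the given two-path as $v_1v_2v_3$ with $v_1,v_2,v_3\in C_i$ and $\deg v_1=\deg v_2=2$, and let $v_0$ be the second neighbour of $v_1$. Then $v_0v_1v_2v_3$ is a three-path with $\deg v_1=\deg v_2=2$, and I would remove the edge $v_1v_2$. Put $Q=\T(v_0v_1)\cap\T(v_2v_3)$ and let $\triangle$ be the associated triangle with corners $Q,v_1,v_2$ not containing $v_0,v_3$, as in Lemma~\ref{lemma:triangle}. By Lemma~\ref{lemma:triangle} and the remark following it, it suffices to show that $\triangle$ contains none of the circles $C_k$. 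The circle $C_i$ is handled directly: the edge-pieces forming $\partial\triangle$ avoid the open disc $D_i$, so $\partial\triangle\cap D_i=\emptyset$ and the connected set $D_i$ lies entirely inside or outside $\triangle$; hence $\triangle\subseteq\overline{O_i}$ (where $O_i:=S^2\setminus\overline{D_i}$) or $\triangle$ contains $D_i$ in its interior. The latter is impossible, since it would give $C_i\subseteq\overline{\triangle}$ and thus $v_3\in\overline{\triangle}$, contradicting that $v_3$ lies neither in $\triangle$ nor on $\partial\triangle$. Therefore $\triangle\subseteq\overline{O_i}$, so $\mathrm{int}\,\triangle\cap C_i=\emptyset$ and no vertex lying on $C_i$ is inside $\triangle$.

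The remaining, and main, difficulty is to exclude a disc $C_k\subseteq\triangle$ with $k\ne i$, equivalently a vertex of some other circle sitting inside $\triangle$. Here I would exploit that \emph{both} edges $v_1v_2$ and $v_2v_3$ have their endpoints on $C_i$: closing each by a chord in $D_i$ yields the simple closed curve $\Gamma=\T(v_1v_2)\cup\T(v_2v_3)\cup\chi_{13}$, a triangle $v_1v_2v_3$ one of whose sides is a chord of $D_i$. Every edge with no endpoint in $\{v_1,v_2,v_3\}$ crosses $v_1v_2$ and $v_2v_3$ once each and avoids $\chi_{13}$, hence meets $\Gamma$ exactly twice and has both endpoints on the same side of $\Gamma$. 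Since $\deg v_1=\deg v_2=2$, an interior vertex $w$ of $\triangle$ (necessarily distinct from $v_0,v_3$) is non-adjacent to $v_1$ and $v_2$, so every edge at $w$ other than possibly $wv_0$ or $wv_3$ avoids $\{v_1,v_2,v_3\}$, and the even-crossing property forces its far endpoint onto the side of $\Gamma$ containing $\triangle$. I expect the crux to be turning these separation constraints into an actual contradiction: parity bookkeeping with $\Gamma$ and $\partial\triangle$ alone still permits an edge to weave out of $\triangle$ and back, so the argument must additionally use the global class-$T_d$ hypothesis that all vertices lie on finitely many pairwise disjoint disc-boundaries. I would therefore argue by descent on a minimal irreducible counterexample: if removing $v_1v_2$ fails, then $\triangle$ encloses a disc $D_k$, and I would locate inside $\triangle$ a strictly smaller instance of the same configuration (a degree-two two-path on a single circle) or a directly removable edge, contradicting minimality. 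Making this descent precise — in particular, showing that an enclosed disc always produces such a smaller removable configuration — is the step I anticipate will require the most care.
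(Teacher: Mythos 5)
Your part~(a) is fine: the paper dismisses it as obvious, and your chord-plus-Jordan-curve argument is the standard justification. The genuine gap is in part~(b), and it is exactly the one you flag yourself: you never exclude a circle $C_k$ with $k\ne i$ (and hence the vertices on it) from the triangle $\triangle$, and the ``descent on a minimal irreducible counterexample'' you offer in its place is not an argument. There is no workable notion of a ``strictly smaller instance inside $\triangle$'': the portion of the drawing inside $\triangle$ is not itself a thrackle drawing of class $T_d$, so no minimality hypothesis can be applied to it, and nothing guarantees that an enclosed disc produces a removable configuration --- indeed the statement you are trying to prove is precisely the tool such a descent would need, so the plan risks circularity.

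What is missing is a change of object, which is how the paper argues: analyse the complement of the \emph{two-path} $v_1v_2v_3$ in $S^2\setminus\bigl(\bigcup_k\overline{D_k}\bigr)$, not the removal triangle. This complement has three components, and exactly one of them, say $\Omega$, has both edges $e_1=\T(v_1v_2)$ and $e_2=\T(v_2v_3)$ on its boundary (each edge borders exactly two of the three faces, and no face can be bounded by circle arcs alone, so the incidence count $(1,1,2)$ is forced). The key claim is that $\overline\Omega$ contains no vertex besides $v_1,v_2,v_3$, and proving it is where your single curve $\Gamma=e_1\cup e_2\cup\chi_{13}$ is too weak: close each edge \emph{separately} by a chord of $D_i$, choosing the chords so that the Jordan curves $\hat e_1=e_1\cup c_1$ and $\hat e_2=e_2\cup c_2$ meet only at $v_2$ without crossing; then the open discs $I_1,I_2$ that they bound on the sides away from $\Omega$ are disjoint. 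If $w\in\overline\Omega$ were a vertex, then an edge at $w$ avoiding $\{v_1,v_2,v_3\}$ crosses each of $e_1,e_2$ once, hence each $\hat e_j$ once, so its far endpoint lies in $I_1\cap I_2=\emptyset$; an edge $wv_3$ crosses $\hat e_1$ once although $w$ and $v_3$ lie on the same ($\Omega$-) side of $\hat e_1$; the case $w=v_0$ with edge $wv_1$ is symmetric; and $wv_2$ is excluded by $\deg v_2=2$. Two opposite-side constraints with disjoint insides give an immediate contradiction, whereas your one same-side constraint from $\Gamma$ permits the ``weaving'' you worried about. Once $\Omega$ is vertex-free the removal goes through: since $v_0v_1$ crosses $e_2$ exactly once and $e_1$ not at all, it must run from $v_1$ through $\Omega$, cross $e_2$ at $Q$, and end in the face beyond $e_2$; hence $\partial\triangle\subseteq\overline\Omega$, while the two faces other than $\Omega$ --- which, together with $D_i$, carry all vertices and all circles with vertices --- lie, like $v_0$ and $v_3$, in the component of the complement of $\partial\triangle$ other than $\triangle$. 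So $\triangle$ is vertex-free and Lemma~\ref{lemma:triangle} applies.
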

\begin{proof}
\eqref{it:noaabbgen} is obvious, as otherwise the thrackle condition will be violated by the corresponding two edges.

\eqref{it:noaaagen} The complement of the two-path in $S^2 \setminus (\cup_{k=1}^d \overline{D_k})$ is the union of three domains, exactly one of which has the two-path on its boundary. That domain can contain no other vertices of the thrackle inside it or on its boundary, as otherwise the thrackle condition is violated. But then by Lemma~\ref{lemma:triangle}, edge removal can be performed on the three-path which is the union of the given two-path and the edge of the graph incident to its first vertex.
\end{proof}

\subsection{Reidemeister moves}
\label{ss:R}

A \emph{Reidemeister move} can be performed on a triple of pairwise non-adjacent edges of a thrackle drawing if the open triangular domain bounded by the segments on each of the edges between the crossings with the other two contains no points of the drawing -- see Figure~\ref{figure:Reid}.
\begin{figure}[h]
\centering
\begin{tikzpicture}[scale=0.6,>=triangle 45]
\foreach \x in {0,8} {
\draw[very thick] ({\x+2*cos(pi/3 r)},{2*sin(pi/3 r)}) -- ({\x+2*cos(4*pi/3 r)},{2*sin(4*pi/3 r)});
\draw[very thick] ({\x+2*cos(2*pi/3 r)},{2*sin(2*pi/3 r)}) -- ({\x+2*cos(5*pi/3 r)},{2*sin(5*pi/3 r)});
\ifthenelse{\x = 0}
    {\draw[very thick] (\x-2,0) to [out=20,in=160] (\x+2,0)}
    {\draw[very thick] (\x-2,0) to [out=-20,in=-160] (\x+2,0)}
  ;
}
\draw[->, very thick] (3,0) -- (5,0);
\end{tikzpicture}
\caption{A Reidemeister move.}
\label{figure:Reid}
\end{figure}
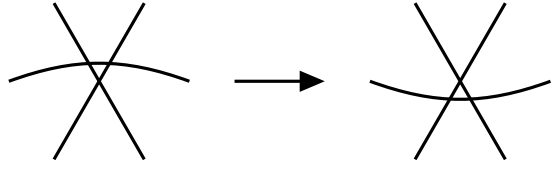
We say that two thrackle drawings are \emph{Reidemeister equivalent} if one can be obtained from the other by a finite sequence of Reidemeister moves.

Suppose that two thrackles $\T_1(G)$ and $\T_2(G)$ can be obtained from one another by a Reidemeister move on a triple of edges $e_i, e_j, e_k$. From Lemma~\ref{lemma:triangle} it follows that if $\T_1(G)$ admits edge removal on a three-path not containing these three edges, then $\T_2(G)$ also does; moreover, after edge removals the resulting two thrackles can again be obtained from one another by the same Reidemeister move. However, adding an edge to $\T_1(G)$ may result in a thrackle which is not Reidemeister equivalent to any thrackle obtained from $\T_2(G)$ by adding an edge, as the added edge may end at a vertex inside the triangular domain $\triangle_{ijk}$ bounded by $e_i, e_j, e_k$. The same is true for edge insertion on $\T_1(G)$.

Now suppose that $\T_1(G)$ and $\T_2(G)$ belong to a class $T_d$. The domains $\triangle_{ijk}$ in both $\T_1(G)$ and $\T_2(G)$ contain no vertices. If we additionally require that they contain no ``inessential" discs $D_l$, those having no vertices on their boundaries, then the edge added to $\T_1(G)$ cannot end in $\triangle_{ijk}$ and so we can add a corresponding edge to $\T_2(G)$ such that the resulting two thrackles are again Reidemeister equivalent. %The same argument also applies to edge insertion on any edge to $\T_1(G)$, with the only exception being when the edge insertion operation is performed on one of the edges $e_i, e_j, e_k$ and the interval $Q_1Q_2$ which we remove at the first step of edge insertion (see Section~\ref{ss:ir}) lies on the boundary of $\triangle_{ijk}$. Apart from that case, the edge insertion procedure on $\T_1(G)$ results in a thrackle drawing such that each of the three new edges crosses the boundary of $\triangle_{ijk}$ an even number of times, and so by a sequence of Reidemeister moves we get a thrackle drawing obtained from $\T_2(G)$ by edge insertion (performed on the same segment of the same edge).

\subsection{Forbidden configurations}
\label{ss:forbidden}

A graph having more edges than vertices always contains one of the following subgraphs: a theta-graph (two vertices joined by three disjoint paths), a dumbbell (two disjoint cycles with a path joining a vertex of one cycle to a vertex of another), or a figure-$8$ graph (two cycles sharing a vertex). To prove Conway's Thrackle Conjecture it is therefore sufficient to show that none of these three graphs admits a thrackle drawing. Repeatedly using the vertex-splitting operation \cite[Figure~1(a)]{MNajc} one can show that the existence of a counterexample of any of these three types implies the existence of a counterexample of the other two types.
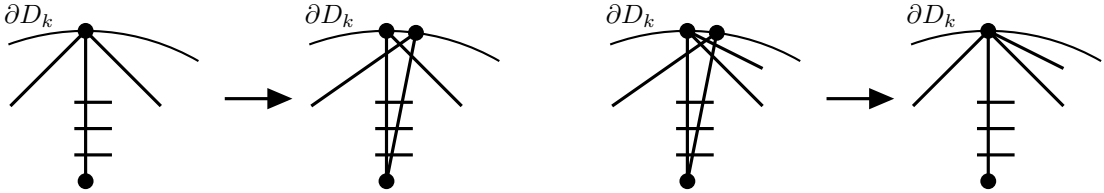
\begin{figure}[h]
\centering
\begin{tikzpicture}[scale=0.5,>=triangle 45]
\foreach \x in {0,8} {
\draw[thick] (\x,0) arc (60:110:6);
\coordinate (A1) at ({\x-6*cos(pi/3 r)},{6-6*sin(pi/3 r)});
\coordinate (A2) at ($(A1)-(0,4)$);
\foreach \y in {A1,A2} {\fill (\y) circle (6.0pt);}
\draw (\x-4.5,1.2) node {$\partial D_k$};
\draw[very thick] (A1)-- (A2);
\draw[very thick] ($(A1)+(2,-2)$) -- (A1);
%\draw[very thick] ($(A1)+(2,-1)$) -- (A1);
\foreach \z in {1,2,3} {\draw[very thick] ($(A2)+(-0.3,\z*0.7)$) -- ($(A2)+(0.7,\z*0.7)$);}
\ifthenelse{\x = 0}
    {\draw[very thick] ($(A1)+(-2,-2)$) -- (A1)}
    {\coordinate (A3) at ($(A1)+(0,-6)+({6*cos(11*pi/24 r)},{6*sin(11*pi/24 r)})$); \fill(A3) circle(6pt);
    \draw [very thick] (A2) -- (A3) -- ($(A1)+(-2,-2)$)}
  ;
}
\draw[->, very thick] (0.7,-1) -- (2.5,-1);
\foreach \x in {16,24} {
\draw[thick] (\x,0) arc (60:110:6);
\coordinate (A1) at ({\x-6*cos(pi/3 r)},{6-6*sin(pi/3 r)});
\coordinate (A2) at ($(A1)-(0,4)$);
\foreach \y in {A1,A2} {\fill (\y) circle (6.0pt);}
\draw (\x-4.5,1.2) node {$\partial D_k$};
\draw[very thick] (A1)-- (A2);
\draw[very thick] ($(A1)+(2,-2)$) -- (A1);
\draw[very thick] ($(A1)+(2,-1)$) -- (A1);
\foreach \z in {1,2,3} {\draw[very thick] ($(A2)+(-0.3,\z*0.7)$) -- ($(A2)+(0.7,\z*0.7)$);}
\ifthenelse{\x = 24}
    {\draw[very thick] ($(A1)+(-2,-2)$) -- (A1)}
    {\coordinate (A3) at ($(A1)+(0,-6)+({6*cos(11*pi/24 r)},{6*sin(11*pi/24 r)})$); \fill(A3) circle(6pt);
    \draw [very thick] (A2) -- (A3) -- ($(A1)+(-2,-2)$)}
  ;
}
\draw[->, very thick] (16.7,-1) -- (18.5,-1);
\end{tikzpicture}
\caption{Splitting a vertex of degree $3$ and a vertex of degree $4$.}
\label{figure:splitting}
\end{figure}
However, this may not be true for thrackle drawings of class $T_d$, as the required vertex-splitting operation on a vertex of degree $3$ may not be permitted within the class $T_d$. The problem is that in order to remain within  the class $T_d$, vertex-splitting  on a vertex of degree 3 may only be performed by doubling the ``middle" edge (as on the left in Figure~\ref{figure:splitting}) and this is too restrictive; for example, starting with a dumbbell, vertex-splitting within  the class $T_d$ might only \emph{increase} the length of the dumbbell handle. So one might not be able to reduce a dumbbell to a figure-$8$ graph. Nevertheless, if we are given a thrackle drawing of class $T_d$ of a figure-$8$ graph, we can always perform the vertex-splitting operation on the vertex of degree $4$ to obtain a thrackle drawing of the same class $T_d$ of a dumbbell, as on the right in Figure~\ref{figure:splitting}. This gives the following lemma.
\begin{lemma} \label{l:TCTd}
To prove Conway's Thrackle Conjecture for thrackle drawings in a class $T_d$ it is sufficient to prove that no dumbbell and no theta-graph admit a thrackle drawing of class $T_d$. In both cases, the corresponding graph contains an even cycle.
\end{lemma}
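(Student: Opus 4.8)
The plan is to prove the lemma in two independent parts: first the reduction of the Conjecture in $T_d$ to the non-existence of thrackled thetas and dumbbells, and then the production of an even cycle in each of these two configurations.

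For the reduction, I would start from a graph $G$ with $m>n$ admitting a thrackle drawing $\T$ of class $T_d$. As recalled just before the statement, $G$ then contains a theta-graph, a dumbbell, or a figure-$8$ graph as a subgraph $H$. Two observations make the subgraph usable: deleting edges from a thrackle drawing leaves a thrackle drawing (the ``meets precisely once'' condition is inherited by any subfamily of edges), and the vertices of $H$, being among those of $G$, still lie on the boundaries $\partial D_1,\dots,\partial D_d$; hence the restriction of $\T$ to $H$ is again a thrackle drawing of class $T_d$. If $H$ is a theta-graph or a dumbbell there is nothing more to do. If $H$ is a figure-$8$ graph, I would apply the vertex-splitting operation to its unique degree-$4$ vertex which, as explained above and shown on the right of Figure~\ref{figure:splitting}, turns $\T(H)$ into a thrackle drawing of class $T_d$ of a dumbbell. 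Thus in every case a class-$T_d$ thrackle drawing of a theta-graph or of a dumbbell is produced, so forbidding these two drawings forbids all graphs with $m>n$, which is exactly the Conjecture for $T_d$.

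It remains to locate an even cycle in each configuration. For the theta-graph this is purely combinatorial and requires no reference to the drawing: writing $a,b,c$ for the edge-lengths of the three internally disjoint paths joining the two branch vertices, the graph's three cycles have lengths $a+b$, $b+c$, $c+a$, whose sum $2(a+b+c)$ is even; hence the three lengths cannot all be odd, and equivalently two of $a,b,c$ share a parity and yield an even cycle.

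The dumbbell is the delicate case, and I expect it to be the only real obstacle, precisely because it is \emph{not} purely combinatorial: an abstract dumbbell may be built from two triangles joined by a path and then has no even cycle at all, so thrackleability must enter. Given a thrackle drawing of a dumbbell with vertex-disjoint cycles $C_1,C_2$ of lengths $\ell_1,\ell_2$, every edge of $C_1$ is non-adjacent to every edge of $C_2$, so in a thrackle each such pair meets in exactly one proper crossing; consequently the two closed curves $\T(C_1)$ and $\T(C_2)$ cross transversally in exactly $\ell_1\ell_2$ points. On the sphere $S^2$ every closed curve is two-sided, so any second closed curve crosses it an even number of times; therefore $\ell_1\ell_2$ is even and at least one of $\ell_1,\ell_2$ is even, giving the required even cycle. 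The point to get right here is that the even cycle is forced by the mod-$2$ intersection count on $S^2$ rather than by the graph structure, so the dumbbell clause of the lemma is genuinely a statement about the thrackled graph.
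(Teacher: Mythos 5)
Your proposal is correct and takes essentially the same route as the paper: restrict the drawing of a graph with $m>n$ to a theta, dumbbell, or figure-$8$ subgraph (restriction preserves both the thrackle condition and the class $T_d$), split the degree-$4$ vertex of a figure-$8$ to obtain a class-$T_d$ dumbbell, and obtain the even cycle from the parity of the three path-lengths for the theta and from the impossibility of two vertex-disjoint odd cycles for the dumbbell. The only difference is that where the paper simply cites \cite[Lemma~2.1]{LPS97} for the dumbbell case, you reprove that fact via the mod-$2$ intersection count of the two cycles on $S^2$ --- which is precisely the argument behind the cited lemma, so your write-up is self-contained but not a genuinely different proof.
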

The second assertion is clear for a theta-graph, and for a dumbbell, follows from the fact that a thracklable graph contains no two vertex-disjoint odd cycles \cite[Lemma~2.1]{LPS97}.

\section{Annular thrackles}
\label{s:ann}

In this section, we prove Theorem~\ref{t:ann}. We can assume that the thrackle drawing lies in the closed annulus bounded by two concentric circles on the plane, the outer circle $A$ and the inner circle $B$; the vertices lie in $A \cup B$, and the rest of the drawing, in the open annulus. As in Section~\ref{ss:ir} we can associate to a path within a thrackle a word in the alphabet $\{a,b\}$, where the letter $a$ (respectively $b$) corresponds to a vertex lying on $A$ (respectively on $B$). To an annular thrackle drawing of an $n$-cycle there corresponds a word $W$ defined up to cyclic permutation and reversing.

The following lemma and the fact that edge removal decreases the length of a cycle by $2$ imply assertion~\eqref{it:annodd}.

\begin{lemma} \label{l:3cycle}
If an $n$-cycle admits an irreducible annular thrackle drawing, then $n = 3$.
\end{lemma}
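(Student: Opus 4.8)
The plan is to assume we are given an irreducible annular thrackle drawing of an $n$-cycle and to read off strong restrictions on its cyclic word $W$ in the alphabet $\{a,b\}$, then show these restrictions are incompatible with $n\ge 4$. First, by Lemma~\ref{l:repeatsgen}\eqref{it:noaaagen} the word $W$ can contain neither $aaa$ nor $bbb$, since any run of three equal letters produces a two-path $x_i^3$ whose first two (degree-$2$) vertices permit an edge removal; and by \eqref{it:noaabbgen} the word cannot contain both $aa$ and $bb$. Interchanging the roles of $A$ and $B$ if necessary, I may assume $bb$ does not occur, so that $W$ is either $a^n$ or a cyclic sequence of isolated $b$'s separating $a$-blocks of length $1$ or $2$.

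Next I would reformulate irreducibility geometrically. For the edge $e=v_iv_{i+1}$, with cycle-neighbours $v_{i-1},v_{i+2}$ and crossing $Q=\T(v_{i-1}v_i)\cap\T(v_{i+1}v_{i+2})$, let $\triangle_e$ be the associated triangle. By Lemma~\ref{lemma:triangle}, irreducibility says every $\triangle_e$ contains a vertex of the drawing. Since all vertices lie on $A\cup B$ while the boundary of $\triangle_e$ meets these circles only at its corners $v_i,v_{i+1}$, each $\triangle_e$ must therefore either enclose the disc bounded by $A$ or the disc bounded by $B$ (that is, $\partial\triangle_e$ is non-contractible in the annulus) or, when $v_i$ and $v_{i+1}$ lie on the same circle, cut off a sub-arc of that circle carrying a further vertex.

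The core of the argument is to show that this cannot hold for all edges once $n\ge4$. The case $W=a^n$ is outerplanar: by Theorem~\ref{t:outer} the drawing is a standard odd musquash, and for $n\ge5$ a star-point triangle contains neither a circle nor a vertex (placing the vertex-free circle $B$ in the central face), so the drawing is reducible; hence $n=3$. When some $b$ occurs, the word restrictions force every $b$-vertex to carry two transversal edges and keep all $a$-blocks short, and I would run an extremal analysis: tracking the winding number of the cyclic curve about the inner disc, I would locate a transversal or $aa$ edge whose triangle is contractible and whose cut-off arc is vertex-free, thereby producing an admissible edge removal and the desired contradiction.

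The substantive difficulty lies in this last step, and it is topological rather than combinatorial: because the annulus is not simply connected, a triangle can be ``blocked'' purely by winding around the central hole, without actually containing any vertex in a naive sense, so the heart of the proof is certifying that a particular triangle is genuinely contractible and empty. I expect to manage this by lifting the drawing to the universal cover of the annulus, where the winding number of the cycle pins down an extremal crossing whose triangle lifts to a bounded, simply-connected region that cannot enclose either disc; the constraints on $W$ then ensure that its cut-off arc is free of vertices. The word combinatorics only tells us where to look, while this winding bookkeeping carries the real weight.
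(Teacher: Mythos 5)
Your preparatory steps are correct and essentially coincide with the paper's: the word constraints come from Lemma~\ref{l:repeatsgen} (the paper in fact only needs part~\eqref{it:noaabbgen}, to assume $W$ contains no $bb$), and the case $W=a^n$ is handled exactly as in the paper, through Theorem~\ref{t:outer}\eqref{it:outRei} and the reducibility of the standard $n$-musquash for $n\ge 5$. (Minor remark: for $n\ge 4$ this case is already vacuous, since the cyclic word $a^n$ contains $a^3$ and Lemma~\ref{l:repeatsgen}\eqref{it:noaaagen} applies.) The geometric reformulation of irreducibility -- every removal triangle must either wrap around one of the two circles or cut off an arc carrying a vertex -- is also fine. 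The genuine gap is that the main case, where $W$ contains a letter $b$, is never proved: ``I would run an extremal analysis'', ``I would locate \dots an edge whose triangle is contractible'', and ``I expect to manage this by lifting the drawing to the universal cover'' are statements of intent, not arguments, and you yourself identify this step as the one carrying ``the real weight''. No specific edge is ever exhibited whose triangle is shown to be empty, and no proof is given that the extremal crossing produced by your winding-number bookkeeping has the properties you ascribe to it.

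I am also skeptical that the plan as stated can be completed, because the invariants you allow yourself ($W$ together with winding data) do not encode the defining thrackle property that every pair of non-adjacent edges crosses exactly once, and it is precisely this property that forces reducibility. The paper's proof of this case is a short, concrete case analysis: since $W$ contains no $bb$ and at least one $b$, it contains $aba$; up to isotopy there are only three ways to attach the next edge to this two-path, two of which are reducible by Lemma~\ref{lemma:triangle}, and the surviving configuration (shown in the middle of Figure~\ref{figure:aba}) admits a unique further extension, which is again reducible. Each of these steps is dictated by the requirement that the new edge cross every existing non-adjacent edge exactly once; nothing in your winding/covering-space scheme tracks this, so it cannot by itself separate the reducible configurations from putative irreducible ones. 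To repair the proof you would either have to carry out an analysis equivalent to the paper's figure-by-figure one, or find a genuinely new argument that injects the pairwise-crossing condition into the universal-cover picture; the proposal as written does neither.
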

\begin{proof}
By Lemma~\ref{l:repeatsgen}\eqref{it:noaabbgen} we can assume that $W$ contains no two consecutive $b$'s. If $W$ contains no letters $b$ at all, then the thrackle is outerplanar and the assertion of the lemma follows from Theorem~\ref{t:outer} \eqref{it:outRei}. Assuming that $W$ contains at least one $b$ we get that $W$ contains a sequence $aba$. Suppose $n>3$; then $n \ge 5$, as no $4$-cycle admits a thrackle drawing on the plane. Consider the next letter in $W$. Up to isotopy, there are three possible ways of adding an extra edge. As the reader may verify, two of them produce a reducible thrackle by Lemma~\ref{lemma:triangle}. The third one is shown in the middle in Figure~\ref{figure:aba}.
\begin{figure}[h]
\centering
\begin{tikzpicture}[scale=0.5,>=triangle 45]
\foreach \x in {0,10,20} {
\draw[thick] (\x,0) circle (4); \draw[thick] (\x,0) circle (1); % annulus
\coordinate (B) at (\x,1);
\coordinate (A2) at ({\x+4*cos((pi/6) r)},{4*sin((pi/6) r)});
\coordinate (A1) at ({\x+4*cos((5*pi/6) r)},{4*sin((5*pi/6) r)}) circle (5.45pt);
\fill (A1) circle (6pt);
\fill (A2) circle (6pt);
\fill (B) circle (6pt);
\draw [very thick] (A1)--(B)--(A2);
\ifthenelse{\x = 0}{}
    {
    \coordinate (A3) at ({\x+4*cos((2*pi/3) r)},{4*sin((2*pi/3) r)});
    \fill (A3) circle (6pt);
    \draw [very thick] (A2) to[out=-90,in=0] ({\x-1},-2) to [out=180,in=-90] (A3);
    \ifthenelse{\x = 10}{}
    {
        \coordinate (A4) at ({\x+4*cos((pi/3) r)},{4*sin((pi/3) r)});
        \fill (A4) circle (6pt);
        \draw [very thick] (A4) to[out=-80,in=0] (\x-0.5,-1.5) to[out=180,in=-90] (A3);
    }
}
  ;
}
\draw[->, very thick] (4.5,0) -- (5.5,0); \draw[->, very thick] (14.5,0) -- (15.5,0);
\end{tikzpicture}
\caption{Adding the third and the fourth edge.}
\label{figure:aba}
\end{figure}
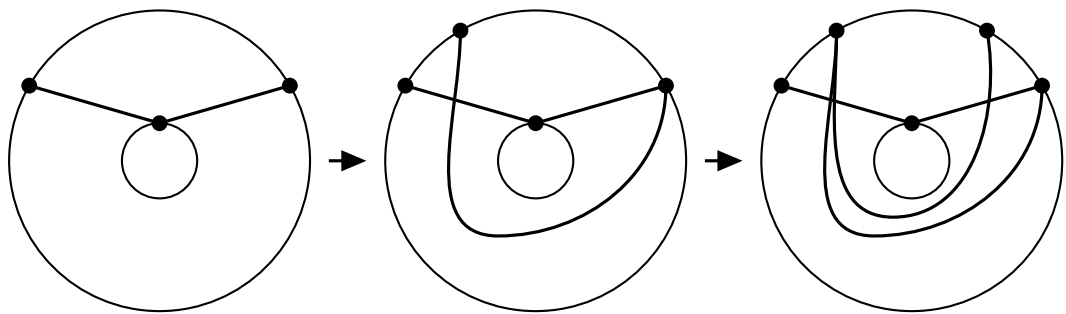
But then there is only one way to add the next edge, as on the right in Figure~\ref{figure:aba} and the resulting thrackle drawing is reducible.
\end{proof}

By Lemma~\ref{l:TCTd}, if in the class of annular thrackles there exists a counterexample to Conway's Thrackle Conjecture, then there exists such a counterexample whose underlying graph contains an even cycle. So assertion~\eqref{it:annC} follows from assertion~\eqref{it:annodd}.

We now prove assertion~\eqref{it:annout}. Suppose a cycle $c$ of an odd length $n$ admits an annular thrackle drawing. We can assume that the corresponding word $W$ contains at least one $b$ and does not contain $b^2$.
\begin{lemma} \label{l:alt}
Up to cyclic permutation, $W=a^{2p}(ba)^rb$, for some $p \ge 1, \, r \ge 0$.
\end{lemma}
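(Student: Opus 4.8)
The plan is to reduce the statement to a single combinatorial claim and then settle that claim by a crossing‑parity argument in the annulus. Write $W$ cyclically as $W = a^{m_1}b\,a^{m_2}b\cdots a^{m_q}b$, where $q\ge 1$ is the number of $b$‑vertices and each $m_i\ge 1$ (this block decomposition is exactly the hypothesis ``no $b^2$'' together with ``at least one $b$''). Call a block \emph{long} if $m_i\ge 2$ and \emph{short} if $m_i=1$. First I would observe that the whole lemma follows from the single assertion that \emph{at most one block is long}: indeed, if all blocks were short then $n=\sum m_i+q=2q$ would be even, so at least one block is long; and if exactly one block is long, of length $M$, the remaining $q-1$ blocks being short, then $n = M + (q-1) + q = M + 2q-1$, so $n$ odd forces $M$ even, $M=2p$ with $p\ge 1$. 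Rotating the long block to the front then gives $W=a^{2p}b(ab)^{q-1}=a^{2p}(ba)^rb$ with $r=q-1\ge 0$. So the entire content is the

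\medskip
\noindent\textbf{Key Claim.} \emph{No two blocks are long.}
\medskip

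The main tool I would use is that an $aa$‑edge $e=a_ia_{i+1}$, being a simple arc in the open annulus with both endpoints on $A$ and missing $B$, separates the annulus into a disc ``cap'' $C_e$ (bounded by $e$ and an arc $\alpha_e\subset A$) and a region containing the whole of $B$. The crucial remark is a parity statement: if an edge $g$ is disjoint from $e$ and both endpoints of $g$ lie on $\alpha_e$, then $g$ begins and ends on the $C_e$‑side, so it crosses $\partial C_e$ an even number of times; since $g$ can meet $\partial C_e$ only along $e$ (its interior avoids $A\supseteq\alpha_e$), it crosses $e$ an even number of times, contradicting the thrackle condition. Two consequences follow. \emph{(i)} Any radial edge $g=\beta u$ disjoint from $e$ has $u\in\alpha_e$: otherwise both $\beta\in B$ and $u$ lie on the non‑cap side, forcing an even number of crossings with $e$. \emph{(ii)} Along any block $B_k$ whose $aa$‑edges are all disjoint from $e$, the vertices alternate strictly between $\alpha_e$ and its complement, because traversing a single $aa$‑edge of $B_k$ crosses $e$ exactly once and hence toggles the side.

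With these in hand the Key Claim is quick. Suppose, for contradiction, that there are $t\ge 2$ long blocks. Fix a long block $B_k$ and choose an $aa$‑edge $e$ inside \emph{another} long block (this is where $t\ge 2$ is used). By \emph{(i)} the two boundary vertices of $B_k$, which carry radial edges disjoint from $e$, both lie in $\alpha_e$; by \emph{(ii)} the vertices of $B_k$ alternate sides, so the two ends lie in $\alpha_e$ only if $m_k$ is odd. As this applies to every long block, \emph{all long blocks have odd length}. Now count parity: with $s$ short and $t$ long blocks ($s+t=q$), $\sum_i m_i \equiv s + t = q \pmod 2$, whence $n=\sum_i m_i + q \equiv 2q \equiv 0 \pmod 2$, contradicting that $n$ is odd. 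Hence at most one block is long, proving the Key Claim and, by the reduction above, the lemma.

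The step I expect to be the main obstacle is the rigorous justification of the separation/parity argument, in particular the fact that $e$ bounds a disc cap $C_e$ whose complement carries all of $B$, and the care needed because the order in which the cycle visits the $a$‑vertices need not match their cyclic order on $A$. This last point is what makes a naive ``short arc versus long arc'' analysis fail; I would circumvent it by never referring to positions on $A$ directly and instead arguing intrinsically with the arc $\alpha_e$ and the crossing parities in consequences \emph{(i)} and \emph{(ii)}, which are insensitive to how the block vertices are distributed around $A$.
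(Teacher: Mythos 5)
Your proof is correct, and it runs on the same geometric engine as the paper's: the disc cut off from the annulus by an $aa$-edge $e$, together with the thrackle condition forcing every vertex-disjoint edge to cross $e$ exactly once, so that vertices toggle sides of $e$. The deployment, however, is genuinely different. The paper takes $e$ to be the last edge of a maximal block $a^k$ and follows the alternation around the \emph{entire} cycle: this yields the word structure $W=a^kbay_1ay_2\cdots y_qab$ in one sweep, forces $k$ to be even, and then kills any further long block because, sitting among the $y_i$'s, it would have odd length at least $3$, while the same reasoning applied to that block's own end edge says it must be even. You never measure a block against its own edge: you only measure a block against an edge of a \emph{different} long block (your consequences \emph{(i)} and \emph{(ii)}, where \emph{(i)} plays the role of the paper's observation that the alternation starts and ends at $b$-vertices, which lie off the cap), concluding that if there were two long blocks then \emph{every} long block would be odd; your contradiction then comes not from an even/odd clash on a single block but from the global count $n=\sum_i m_i+q\equiv 2q\equiv 0\pmod 2$ against $n$ odd. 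What your organization buys is exactly what you flag at the end: the argument is local to each block and insensitive to the cyclic order in which the cycle visits the $a$-vertices on $A$, so the topological bookkeeping is lighter and needs only one type of measurement. What it gives up is the explicit structural description of $W$, which the paper's single sweep produces directly and which you recover only at the very end through the counting reduction.
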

\begin{proof}
As $n$ is odd, $W$ contains a subword $a^2$. Let $a^k, \; k \ge 2$, be a maximal by inclusion string of consecutive $a$'s. If $k=n-1$, we are done. Otherwise, up to cyclic permutation, $W=a^kb w b$ for some word $w$. Consider the edge $e$ defined by the last pair $aa$ in $a^k$. Let $\gamma$ be the arc of $A$ joining the endpoints of $e$ such that the domain bounded by $e \cup \gamma$ does not contain $B$. Every edge of the thrackle not sharing a common vertex with $e$ crosses it, so every second vertex counting from the last $a$ in $a^k$ lies in the interior of $\gamma$. It follows that $W=a^kbay_1ay_2a \dots y_q a b$, where $y_i \in \{a, b\}$, and so $k$ is necessarily even. By the same reasoning, any maximal sequence of more than one consecutive $a$'s in $W$ is even. But then $y_i=b$, for all $i=1, \dots, q$, as otherwise $W$ would contain a maximal sequence of consecutive $a$'s of an odd length greater than one.
\end{proof}

To prove assertion~\eqref{it:annout} we show any annular thrackled cycle is alternating; then the claim follows from the fact that alternating thrackles are outerplanar, as was proved in \cite[Theorem~2]{GY2012}. Recall that a thrackled cycle is called \emph{alternating} if for every edge $e$ and every two-path $fg$ vertex-disjoint from $e$, the crossings of $e$ by $f$ and $g$ have opposite orientations.

Suppose $c$ is a cycle of the shortest possible length which admits a non-alternating annular thrackle drawing $\T(c)$; the length of $c$ must be at least $7$. An easy inspection shows that any edge vertex-disjoint with a two-path $aba$ (or $bab$) crosses its edges with opposite orientations. The same is true for a two-path $a^3$. It remains to show that any edge vertex-disjoint with a two-path $aab$ also crosses the edges of that two-path with opposite orientations. Up to isotopy, the only drawing for which this is not true is the one shown in Figure~\ref{figure:annaab}.
\begin{figure}[h]
\centering
\begin{tikzpicture}[scale=0.5,>=triangle 45]
\draw[thick] (0,0) circle (4); \draw[thick] (0,0) circle (1);
\node[coordinate] (A1) at ({4*cos((7*pi/9) r)},{4*sin((7*pi/9) r)}) [label=90:$a_1$] {};
\node[coordinate] (A2) at ({4*cos((11*pi/9) r)},{4*sin((11*pi/9) r)}) [label=-90:$a_2$] {};
\node[coordinate] (B1) at (-1,0) [label=0:$b_1$] {};
\node[coordinate] (Bp) at (0,1) [label=-90:$b'$] {};
\node[coordinate] (Ap) at ({-sqrt(15)},1) [label=180:$a'$] {};
\foreach \x in {A1,A2,Ap,Bp,B1} {\fill (\x) circle (6pt);}
\draw [very thick] (A1)--(A2);
\draw [very thick] (Ap)--(Bp);
\draw [very thick] (A2) to [out=0,in=-90] (2.5,0) to [out=90,in=0] (0,2) to [out=180,in=90] (B1);
\end{tikzpicture}
\caption{A non-alternating crossing.}
\label{figure:annaab}
\end{figure}
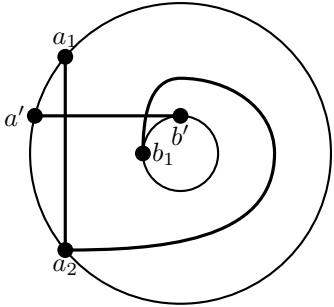
Note that the edge which violates the alternating condition necessarily joins an $a$-vertex and a $b$-vertex. We claim that such a drawing cannot be a part of $\T(c)$. To see that, we consider possible drawings of the four-path in $c$ which extends the path $a_1a_2b_1$.
The vertex following $b_1$ must be an $a$-vertex (call it $a_3$); there are two possible cases: $a_3 = a'$ and $a_3 \ne a'$. In the first case, up to isotopy, we get the drawing on the left in Figure~\ref{figure:annaaba1}, and then there is only one possible way to attach an edge at $a_1$, as shown on the right in Figure~\ref{figure:annaaba1}. But then performing edge removal on $a_1a_2$ we get a shorter non-alternating annular thrackled cycle, a contradiction.
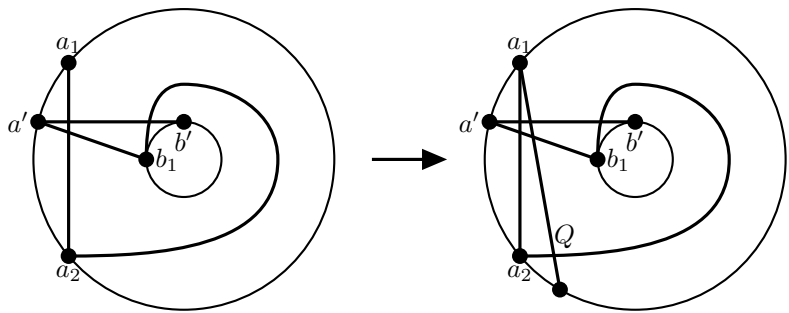
\begin{figure}[h]
\centering
\begin{tikzpicture}[scale=0.5,>=triangle 45]
\foreach \y in {0,12} {
\draw[thick] (\y,0) circle (4); \draw[thick] (\y,0) circle (1);
\node[coordinate] (A1) at ({\y+4*cos((7*pi/9) r)},{4*sin((7*pi/9) r)}) [label=90:$a_1$] {};
\node[coordinate] (A2) at ({\y+4*cos((11*pi/9) r)},{4*sin((11*pi/9) r)}) [label=-90:$a_2$] {};
\node[coordinate] (B1) at (\y-1,0) [label=0:$b_1$] {};
\node[coordinate] (Bp) at (\y,1) [label=-90:$b'$] {};
\node[coordinate] (Ap) at ({\y-sqrt(15)},1) [label=180:$a'$] {};
\foreach \x in {A1,A2,Ap,Bp,B1} {\fill (\x) circle (6pt);}
\draw [very thick] (A1)--(A2);
\draw [very thick] (Ap)--(Bp);
\draw [very thick] (A2) to [out=0,in=-90] (\y+2.5,0) to [out=90,in=0] (\y,2) to [out=180,in=90] (B1);
\draw [very thick] (Ap)--(B1);
\ifthenelse{\y = 12}
    {\coordinate (AA) at (\y-2,{-sqrt(12)}); \fill(AA) circle(6pt);
    \draw [very thick] (A1)--(AA);
    \node[coordinate] (Q) at (\y-2.4,-2.6) [label=45:$Q$] {};
    }
  ;
}
\draw[->, very thick] (5,0) -- (7,0);
\end{tikzpicture}
\caption{Path $a_1a_2b_1a_3, \; a_3 = a'$.}
\label{figure:annaaba1}
\end{figure}
Now suppose $a_3 \ne a'$. We have two cases for adding the edge $b_1a_3$, and then by Lemma~\ref{l:alt}, the letter after $a_3$ must be a $b$. In the first case, up to isotopy and a Reidemeister move, we get the drawing on the left in Figure~\ref{figure:annaaba2}, and then we can attach the edge joining $a_3$ to a $b$-vertex uniquely, up to isotopy and a Reidemeister move, as on the right in Figure~\ref{figure:annaaba2}.
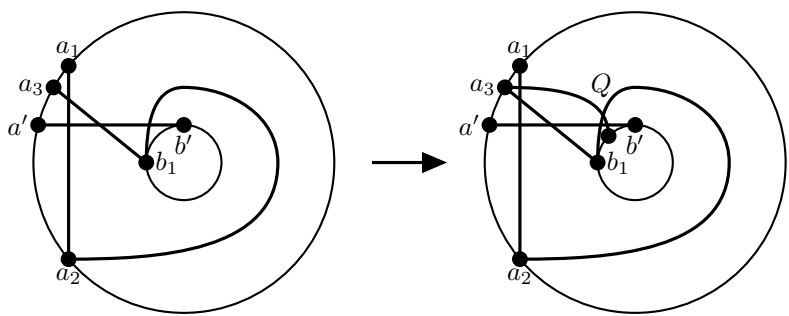
\begin{figure}[h]
\centering
\begin{tikzpicture}[scale=0.5,>=triangle 45]
\foreach \y in {0,12} {
\draw[thick] (\y,0) circle (4); \draw[thick] (\y,0) circle (1);
\node[coordinate] (A1) at ({\y+4*cos((7*pi/9) r)},{4*sin((7*pi/9) r)}) [label=90:$a_1$] {};
\node[coordinate] (A2) at ({\y+4*cos((11*pi/9) r)},{4*sin((11*pi/9) r)}) [label=-90:$a_2$] {};
\node[coordinate] (B1) at (\y-1,0) [label=0:$b_1$] {};
\node[coordinate] (Bp) at (\y,1) [label=-90:$b'$] {};
\node[coordinate] (Ap) at ({\y-sqrt(15)},1) [label=180:$a'$] {};
\node[coordinate] (A3) at ({\y-sqrt(12)},2) [label=180:$a_3$] {};
\foreach \x in {A1,A2,Ap,Bp,B1,A3} {\fill (\x) circle (6pt);}
\draw [very thick] (A1)--(A2);
\draw [very thick] (Ap)--(Bp);
\draw [very thick] (A2) to [out=0,in=-90] (\y+2.5,0) to [out=90,in=0] (\y,2) to [out=180,in=90] (B1);
\draw [very thick] (A3)--(B1);
\ifthenelse{\y = 12}
    {\coordinate (B2) at ({\y-sqrt(2)/2},{sqrt(2)/2}); \fill(B2) circle(6pt);
    \draw [very thick] (A3) to [out=0,in=90] (B2);
    \node[coordinate] (Q) at (\y-0.9,1.5) [label=90:$Q$] {};
    }
  ;
}
\draw[->, very thick] (5,0) -- (7,0);
\end{tikzpicture}
\caption{Path $a_1a_2b_1a_3, \; a_3 \ne a'$, case 1.}
\label{figure:annaaba2}
\end{figure}
Again, performing edge removal on $b_1a_3$ we get a shorter non-alternating annular thrackled cycle.
The second possibility of attaching the edge $b_1a_3, \; a_3 \ne a'$, to the drawing in Figure~\ref{figure:annaab} is the one shown on the left in Figure~\ref{figure:annaaba3}, up to isotopy. Then the edge joining $a_3$ to the next $b$-vertex can be also added uniquely, up to isotopy, as on the right in Figure~\ref{figure:annaaba3}, and yet again, edge removal on $b_1a_3$ results in a shorter non-alternating annular thrackled cycle. This completes the proof of Theorem~\ref{t:ann}.
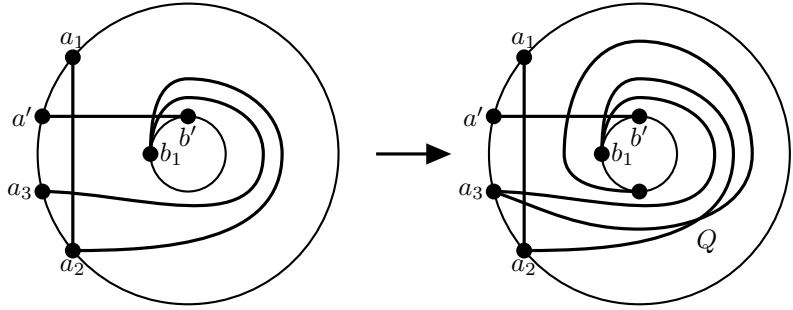
\begin{figure}[h]
\centering
\begin{tikzpicture}[scale=0.5,>=triangle 45]
\foreach \y in {0,12} {
\draw[thick] (\y,0) circle (4); \draw[thick] (\y,0) circle (1);
\node[coordinate] (A1) at ({\y+4*cos((7*pi/9) r)},{4*sin((7*pi/9) r)}) [label=90:$a_1$] {};
\node[coordinate] (A2) at ({\y+4*cos((11*pi/9) r)},{4*sin((11*pi/9) r)}) [label=-90:$a_2$] {};
\node[coordinate] (B1) at (\y-1,0) [label=0:$b_1$] {};
\node[coordinate] (Bp) at (\y,1) [label=-90:$b'$] {};
\node[coordinate] (Ap) at ({\y-sqrt(15)},1) [label=180:$a'$] {};
\node[coordinate] (A3) at ({\y-sqrt(15)},-1) [label=180:$a_3$] {};
\foreach \x in {A1,A2,Ap,Bp,B1,A3} {\fill (\x) circle (6pt);}
\draw [very thick] (A1)--(A2);
\draw [very thick] (Ap)--(Bp);
\draw [very thick] (A2) to [out=0,in=-90] (\y+2.5,0) to [out=90,in=0] (\y,2) to [out=180,in=90] (B1);
\draw [very thick] (A3) to [out=0,in=-90] (\y+2,0) to [out=90,in=0] (\y,1.5) to [out=180,in=90] (B1);
\ifthenelse{\y = 12}
    {\coordinate (B2) at (\y,-1); \fill(B2) circle(6pt);
    \draw [very thick] (A3) to [out=-20,in=180] (\y,-2) to [out=0,in=-90] (\y+3,0) to [out=90,in=0] (\y,3) to [out=180,in=90] (\y-2,0) to [out=-90,in=180] (B2);
    \node[coordinate] (Q) at (\y+1.8,-2.9) [label=90:$Q$] {};
    }
  ;
}
\draw[->, very thick] (5,0) -- (7,0);
\end{tikzpicture}
\caption{Path $a_1a_2b_1a_3, \; a_3 \ne a'$, case 2.}
\label{figure:annaaba3}
\end{figure}

\section{Pants thrackles}
\label{s:pants}

In this section, we prove Theorem~\ref{t:pants}. We represent the pair of pants domain $P$ whose closure contains the drawing as the interior of an ellipse, with two disjoint closed discs removed. To a path in a pants thrackle drawing we associate a word in the alphabet $\{a, b, c\}$, where $a$ corresponds to the vertices on the ellipse, and $b$ and $c$, to the vertices on the circles bounding the discs (e.g., as in Figure~\ref{figure:cabac}).

We start with the following proposition which implies assertion~\eqref{it:pantsodd} of Theorem~\ref{t:pants} and will also be used in the proof of assertion~\eqref{it:pantseven}.
{
\begin{proposition*} %\label{p:redpants}
If a cycle $C$ admits an irreducible pants thrackle drawing, then $C$ is either a three-cycle or a six-cycle, and in the latter case, the drawing is Reidemeister equivalent to the one in Figure~\ref{figure:sixcycle}.
\end{proposition*}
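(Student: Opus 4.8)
The plan is to extract constraints on the cyclic word $W$ over $\{a,b,c\}$ associated with the cycle $C$, then to peel off the cases that are really annular, and finally to pin down the genuinely three-circle configurations by a build-up argument. Since every vertex of a cycle has degree $2$, Lemma~\ref{l:repeatsgen}\eqref{it:noaaagen} applies to any three consecutive equal letters of $W$: as no four-cycle is thrackleable, we have $n=3$ or $n\ge 5$, and whenever $n\ge 5$ a subword $x_i^3$ together with the edge preceding it forms a three-path on four distinct vertices whose middle two vertices have degree $2$, so it could be removed and the drawing would be reducible. Thus for $n \ge 5$ the word $W$ contains no three consecutive equal letters. Lemma~\ref{l:repeatsgen}\eqref{it:noaabbgen} shows in addition that at most one of the subwords $aa$, $bb$, $cc$ can occur. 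The case $n = 3$ needs no work: a three-cycle has only three vertices and therefore admits no edge removal at all, so it is automatically irreducible, which gives the first alternative of the statement. It remains to prove that an irreducible drawing with $n\ge 5$ forces $n=6$, $W = (abc)^2$, and the drawing of Figure~\ref{figure:sixcycle}.

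The first step for $n \ge 5$ is to show that all three letters occur. Suppose $W$ omits one of them, say $c$; then no vertex lies on the boundary of the inner disc $D_c$. Filling this inessential hole, i.e.\ regarding the whole drawing inside the annulus $S^2\setminus(\overline{D_a}\cup\overline{D_b})$, produces an annular (class $T_2$) thrackle drawing of $C$ on the same vertex set; the omitted-letter cases are interchanged by a homeomorphism of the pants, so there is no loss of generality. Crucially, the availability of an edge removal depends only on the vertices of the drawing through the empty-triangle criterion of Lemma~\ref{lemma:triangle}, and not on the ambient class, so our drawing is irreducible as an annular drawing as well. Lemma~\ref{l:3cycle} then forces $n=3$, contradicting $n \ge 5$. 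Hence $W$ must contain each of $a$, $b$, $c$.

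The heart of the matter is the remaining situation: $W$ is a cyclic word in all three letters, with no triple $x_i^3$ and at most one of $aa,bb,cc$. Here the plan is to imitate, in the richer pants setting, the constructive enumeration carried out for Lemma~\ref{l:3cycle}. Starting from a short admissible subword, I would grow the cycle one edge at a time, at each stage listing the finitely many isotopy classes (up to Reidemeister moves) in which the next edge can be drawn inside the pants, and discarding every branch in which some incident three-path bounds a triangle free of vertices, since by Lemma~\ref{lemma:triangle} that branch is reducible. The outcome to be established is twofold: first, that no surviving word contains a repeated letter, so the word is strictly alternating between the three circles; and second -- this is the subtle point -- that the winding of the successive edges around the two inner holes, constrained by the requirement that every pair of vertex-disjoint edges cross exactly once, forces the construction to close up after exactly six vertices in the pattern $abcabc$, any attempt to insert a seventh vertex producing an empty triangle and hence a removable edge. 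I expect this winding-and-crossing bookkeeping to be the main obstacle, just as the (simpler) enumeration of how the third and fourth edges can be attached was the crux of Lemma~\ref{l:3cycle}.

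Granting the case analysis, we obtain $n=6$ and $W=(abc)^2$. Since every choice in the build-up was made up to isotopy and Reidemeister equivalence, the resulting drawing is Reidemeister equivalent to a single normal form, which one checks to be the drawing in Figure~\ref{figure:sixcycle}; conversely, that drawing is readily seen to be irreducible, as none of its incident three-paths bounds an empty triangle. This yields the second alternative and completes the proof.
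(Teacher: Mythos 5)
Your opening reductions are correct and in the same spirit as the paper's: Lemma~\ref{l:repeatsgen} gives the constraints on repeated letters, and your observation that irreducibility does not depend on the ambient class (because the criterion of Lemma~\ref{lemma:triangle} tests only for vertices inside the triangle $\triangle$) legitimately reduces the case of a word omitting a letter to the annular Lemma~\ref{l:3cycle}. The paper reaches the same annular reduction by a different route (via its Lemmas~\ref{l:repeats} and~\ref{l:caba}), but your shortcut for that particular step is sound, as is the trivial treatment of $n=3$.

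Everything after that, however, is a plan rather than a proof, and it is exactly where the content of the Proposition lies. For the remaining case (all three letters occur) you propose to ``grow the cycle one edge at a time, list the isotopy classes, discard reducible branches,'' and you label as ``the outcome to be established'' precisely the two facts that need proving: that no repetition survives and that the construction must close at $n=6$. Concretely, what is missing is: (i) the elimination of repeated letters when all three circles are used --- in the paper this is Lemma~\ref{l:repeats} (an edge $aa$ forces one complementary domain to be a disc, whence every second letter is $a$ and $C$ is odd) together with Lemma~\ref{l:caba} (no subword $caba$ or $baca$), a genuinely geometric case analysis; (ii) the deduction $W=(abc)^m$; (iii) the key structural idea absent from your sketch: attaching to each letter the sign of the turn the cycle makes at that vertex and proving that consecutive signs must differ (Figure~\ref{figure:ab+c+a}), which forces $6\mid n$ and collapses the enumeration to a single sign pattern --- without such an invariant, your edge-by-edge search faces an uncontrolled proliferation of sign cases and has to exclude all lengths $9,12,15,\dots$ by hand; and (iv) the terminal obstruction, which is not the one you predict: the paper shows that the essentially unique five-path $bc_+a_-b_+c_-a$ either closes up to the drawing of Figure~\ref{figure:sixcycle} or extends to a six-path to which \emph{no} seventh edge $ca$ with the required crossings can be attached at all (Figure~\ref{figure:abcabca}); the obstruction is nonexistence of the next edge, not an empty triangle creating a removable edge. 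So the proposal establishes only the outer shell of the statement ($n=3$ and the annular degenerations) and defers its core to an enumeration that is never carried out.
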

\begin{proof}
Let $W$ be the (cyclic) word corresponding to an irreducible pants thrackle drawing of a cycle $C$. The following lemma can be compared to Lemma~\ref{l:alt}.
{
\begin{lemma} \label{l:repeats}
If $W$ contains $a^2$, then one of the two domains of the complement of the corresponding edge in $P$ is a disc, the cycle $C$ is odd, and $W=aay_1ay_2 \dots y_{m-1}ay_m$, where $y_i \in \{b, c\}$ for $i=1, \dots, m$.
\end{lemma}
\begin{proof}
Suppose no domain of the complement of an edge $aa$ is a disc. By Lemma~\ref{l:repeatsgen}\eqref{it:noaaagen}, neither the letter which precedes $a^2$ in $W$, nor the next letter after $a^2$ is $a$, and for the corresponding edges to cross, those two letters must be the same, say $b$. If the corresponding three-path $baab$ is irreducible, it has to be isotopic to the path on the left in Figure~\ref{figure:aanonzero}. But then there is a unique, up to isotopy, way to add to the path the starting segment of the next edge, and it produces a reducible three-path, as on the right in Figure~\ref{figure:aanonzero}.
\begin{figure}[h]
\centering
\begin{tikzpicture}[scale=0.6,>=triangle 45]
\foreach \y in {0,13} {
\draw[thick]  (\y,0) ellipse (5 and 3); \draw[thick]  (\y-2,0) circle (1); \draw[thick]  (\y+2,0) circle (1);
\node[coordinate] (A1) at (\y,3) [label=-45:$a$] {};
\node[coordinate] (A2) at (\y,-3) [label=45:$a$] {};
\node[coordinate] (B1) at (\y-2,-1) [label=90:$b$] {};
\node[coordinate] (B2) at (\y-1,0) [label=180:$b$] {};
\foreach \x in {A1,A2,B1,B2} {\fill (\x) circle (5.0pt);}
\draw [very thick] (A1) to (A2);
\draw [very thick] (A2) to (B1);
\draw [very thick] (A1) to[out=-135,in=90] (\y-4,0) to [out=-90,in=180] (\y-2,-2) to [out=0,in=-45] (B2);
}
\draw [very thick] (B1) to (13.5,-1);
\draw [very thick,dashed] (13.5,-1)--(14.5,-1);
\draw[->, very thick] (5.5,0) -- (7.5,0);
\end{tikzpicture}
\caption{Adding the fourth edge produces a reducible path.}
\label{figure:aanonzero}
\end{figure}
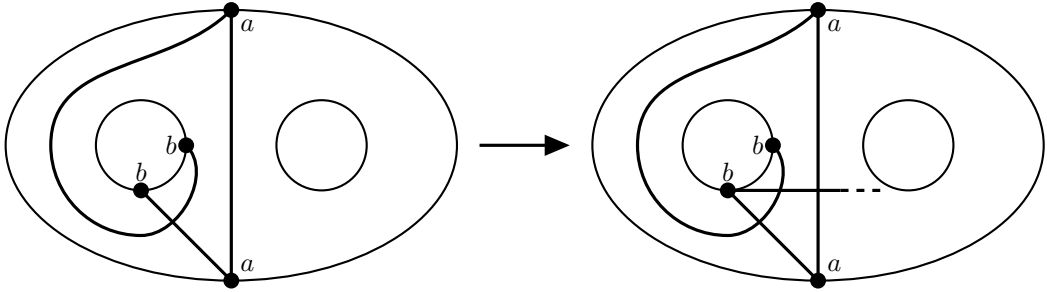
It follows that if $W$ contains $a^2$, then one of the two domains of the complement of the corresponding edge in $P$ is a disc. But then by the thrackle condition, every second vertex counting from the second $a$ in $aa$ is again $a$, so $W=aay_1ay_2 \dots y_{m-1}ay_m$, for some $y_i \in \{a, b, c\}$. In particular, $C$ is an odd cycle and furthermore, none of the $y_i$ can be equal to $a$ by Lemma~\ref{l:repeatsgen}\eqref{it:noaaagen}.
\end{proof}
}

\begin{lemma} \label{l:caba}
Suppose the word $W$ contains no subwords $bb$ and $cc$. Then it contains no subwords $caba$ or $baca$.
\end{lemma}
\begin{proof}
Arguing by contradiction (and renaming the letters if necessary) suppose that $W$ contains the subword $caba$. The only irreducible three-path corresponding to that subword, up to isotopy, is shown on the left in Figure~\ref{figure:cabac}. Suppose that the next letter in $W$ is not $a$. Then the only irreducible four-path extending $caba$, up to isotopy, is the one shown on the right in Figure~\ref{figure:cabac}.
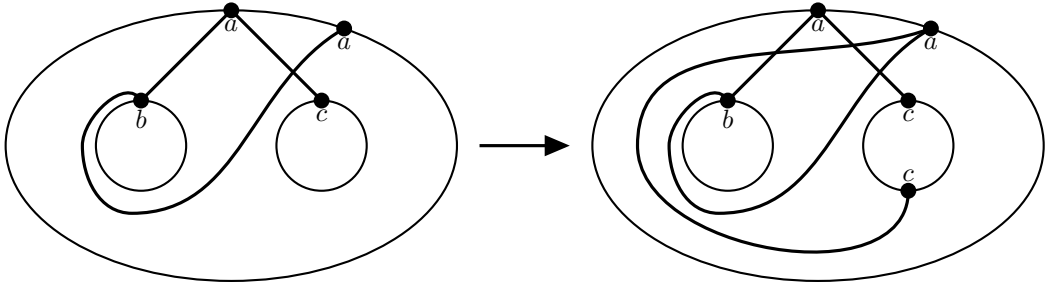
\begin{figure}[h]
\centering
\begin{tikzpicture}[scale=0.6,>=triangle 45]
\foreach \y in {0,13} {
\draw[thick]  (\y,0) ellipse (5 and 3); \draw[thick]  (\y-2,0) circle (1); \draw[thick]  (\y+2,0) circle (1);
\node[coordinate] (A1) at (\y,3) [label=-90:$a$] {};
\node[coordinate] (A2) at ({\y+5*cos((pi/3) r)},{3*sin((pi/3) r)}) [label=-90:$a$] {};
\node[coordinate] (B1) at (\y-2,1) [label=-90:$b$] {};
\node[coordinate] (C1) at (\y+2,1) [label=-90:$c$] {};
\foreach \x in {A1,A2,B1,C1} {\fill (\x) circle (5.0pt);}
\draw [very thick] (C1) to (A1) to (B1);
\draw [very thick] (B1) to [out=120,in=90] (\y-3.3,0) to [out=-90,in=180] (\y-2.2,-1.5) to [out=0,in=-150] (A2);
}
\node[coordinate] (C2) at (15,-1) [label=90:$c$] {}; \fill (C2) circle (5.0pt);
\draw [very thick] (A2) to [out=-160,in=90] (9,0) to [out=-90,in=-90] (C2);
\draw[->, very thick] (5.5,0) -- (7.5,0);
\end{tikzpicture}
\caption{The irreducible path $caba$ and the next edge ending not in $a$.}
\label{figure:cabac}
\end{figure}
If $C$ is of length five, then $W=cabac$ which contradicts the fact that the (cyclic) word $W$ does not contain a subword $cc$. Otherwise, there are only three possible ways, up to isotopy and a Reidemeister move, to add another edge starting at the last added vertex $c$ in such a way that the resulting drawing is a thrackled path. But one of them results in a reducible drawing, and the other two end in $c$ contradicting the fact that $W$ does not contain a subword $cc$.

It follows that the letter following $caba$ in $W$ must be an $a$, so we get a subword $cabaa$. If the length of the cycle $C$ is greater than $5$, then by Lemma~\ref{l:repeats}, the letter which precedes $c$ must be $a$, so $W$ contains the subword $acabaa$. But then the above argument applied to the subword $acab$ (if we reverse the direction of $C$ and swap $b$ and $c$) implies that the letter which precedes the starting $a$ is another $a$, so that $W$ contains the subword $aacabaa$, giving a contradiction with Lemma~\ref{l:repeats}.

If $C$ is of length five, then $W=cabaa$ and the resulting drawing is reducible by Lemma~\ref{lemma:triangle}, as there is just a single $b$ in $W$, and so the triangular domain corresponding to the three-path $caba$ on the left in Figure~\ref{figure:cabac} contains no other vertices of the thrackle.
\end{proof}

Now if $W$ contains the subword $aa$, then by Lemma~\ref{l:repeats} and Lemma~\ref{l:caba}, the word $W$ may contain only one of the letters $b$ or $c$. Then the drawing is annular, and hence by Lemma~\ref{l:3cycle} is reducible unless $C$ is a three-cycle. Suppose $W$ contains no letter repetitions. Then Lemma~\ref{l:caba} applies to any subword $xyzy$ such that $\{x, y, z\} = \{a, b, c\}$. Furthermore, up to renaming the letters we can assume that $W$ starts with $ab$. If $W$ contains no subword $abc$, then $W=(ab)^m$, and so the drawing is annular. We can therefore assume that $W$ contains a subword $abc$. Then the following letter cannot be any of $b$ or $c$, so it must be an $a$. Repeating this argument we obtain that $W=(abc)^m$.

We now modify the word $W$ by attaching to every letter a subscript plus (respectively minus) if the tangent vector to the drawing in the direction of the cycle $C$ makes a positive (respectively negative) turn at the corresponding vertex; in other words, the subscript is a plus (respectively a minus) if the path turns left (respectively right) at the vertex. We will occasionally omit the subscript when it is unknown or unimportant.

Note that if the length of $C$ is greater than $3$, then no two consecutive subscripts in the word $W=(abc)^m$ can be the same. Indeed, assume that $W$ contains a subword $ab_+c_+a$. Then the corresponding irreducible three-path is unique up to isotopy, as shown on the left in Figure~\ref{figure:ab+c+a}, and the only possible way to attach an edge $ab$ results in a reducible drawing, as on the right in Figure~\ref{figure:ab+c+a}. By reflection, a similar comment applies to subwords of the form $ab_-c_-a$.
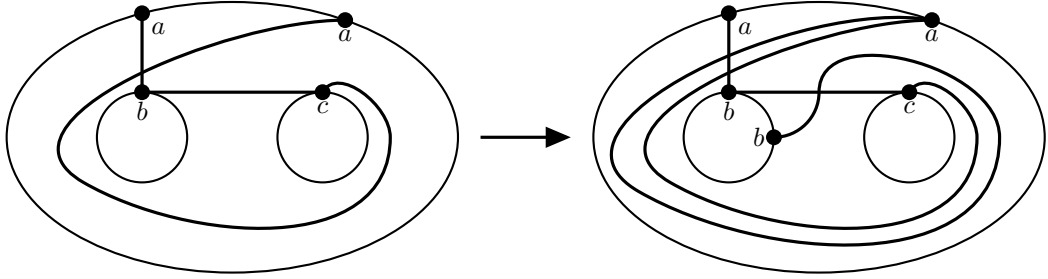
\begin{figure}[h]
\centering
\begin{tikzpicture}[scale=0.6,>=triangle 45]
\foreach \y in {0,13} {
\draw[thick]  (\y,0) ellipse (5 and 3); \draw[thick]  (\y-2,0) circle (1); \draw[thick]  (\y+2,0) circle (1);
\node[coordinate] (A1) at (\y-2,{3*sqrt(21)/5}) [label=-45:$a$] {};
\node[coordinate] (A2) at ({\y+5*cos((pi/3) r)},{3*sin((pi/3) r)}) [label=-90:$a$] {};
\node[coordinate] (B1) at (\y-2,1) [label=-90:$b$] {};
\node[coordinate] (C1) at (\y+2,1) [label=-90:$c$] {};
\foreach \x in {A1,A2,B1,C1} {\fill (\x) circle (5.0pt);}
\draw [very thick] (A1) to (B1) to (C1);
\draw [very thick] (C1) to [out=60,in=90] (\y+3.5,0) to [out=-90,in=-30] (\y-3.3,-1) to [out=150,in=-180] (A2);
}
\node[coordinate] (B2) at (12,0) [label=180:$b$] {}; \fill (B2) circle (5.0pt);
\draw [very thick] (A2) to [out=170,in=150] (9,-1.2) to [out=-30,in=-90] (17,0) to [out=90,in=90] (13,1) to [out=-90,in=0] (B2);
\draw[->, very thick] (5.5,0) -- (7.5,0);
\end{tikzpicture}
\caption{The path $ab_+c_+a$ extends to a reducible drawing.}
\label{figure:ab+c+a}
\end{figure}
It follows that the subscripts in $W$ alternate and in particular, the length of $C$ is divisible by $6$.

There are two drawings of the three-path $ab_+c_-a$, both irreducible, as shown in Figure~\ref{figure:ab+c-a1}.
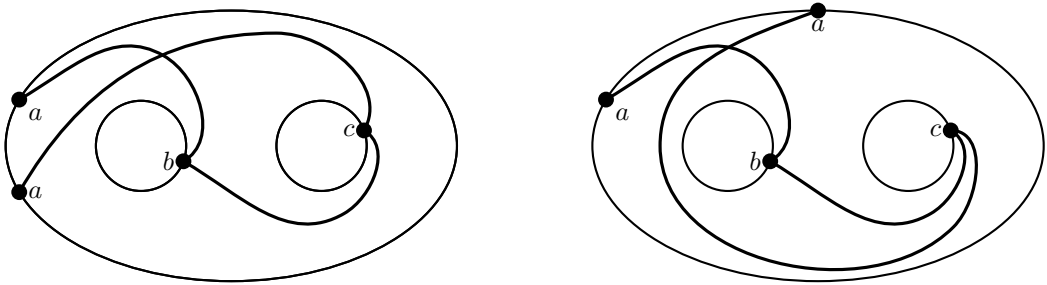
\begin{figure}[h]
\centering
\begin{tikzpicture}[scale=0.6,>=triangle 45]
\draw[thick]  (0,0) ellipse (5 and 3); \draw[thick]  (-2,0) circle (1); \draw[thick]  (2,0) circle (1);
\foreach \y in {0,13} {
\draw[thick]  (\y,0) ellipse (5 and 3); \draw[thick]  (\y-2,0) circle (1); \draw[thick]  (\y+2,0) circle (1);
\node[coordinate] (A1) at ({\y + 5*cos((8*pi/9) r)},{3*sin((8*pi/9) r)}) [label=-30:$a$] {};
\node[coordinate] (B1) at ({\y -2+cos((-pi/9) r)},{sin((-pi/9) r)}) [label=180:$b$] {};
\node[coordinate] (C1) at ({\y + 2+cos((pi/9) r)},{sin((pi/9) r)}) [label=180:$c$] {};
\draw [very thick] (A1) to[out=30,in=150] (\y -1.5,2) to[out=-30,in=30] (B1);
\draw [very thick] (B1) to[out=-30,in=-150] (\y + 2.5,-1.5) to[out=30,in=-30] (C1);
\ifthenelse{\y = 0}
    {\node[coordinate] (A2) at ({\y + 5*cos((10*pi/9) r)},{3*sin((10*pi/9) r)}) [label=0:$a$] {};
    \draw [very thick] (A2) to[out=60,in=180] (\y + 1,2.5) to[out=0,in=60] (C1)}
    {\node[coordinate] (A2) at (\y,3) [label=-90:$a$] {};
    \draw [very thick] (C1) to [out=0,in=45] (\y+3,-1.8) to [out=-135,in=-90] (\y-3.5,0) to [out=90,in=-160] (A2)}
  ;
\foreach \x in {A1,A2,B1,C1} {\fill (\x) circle (5.0pt);}
}
\end{tikzpicture}
\caption{Two paths $ab_+c_-a$.}
\label{figure:ab+c-a1}
\end{figure}
They differ by the orientation of the crossing of the edges $ab$ and $ca$. If we change the direction on $C$ and swap the letters $b$ and $c$, the subword $ab_+c_-a$ does not change. By reflection, a similar comment applies to subwords of the form $ab_-c_+a$. Hence the whole word $W$ is unchanged, with all the subscripts, but the orientation of the crossings of the edges $ab$ and $ca$ are reversed. We therefore lose no generality by assuming that the subword $ab_+c_-a$ is represented by the three-path on the left in Figure~\ref{figure:ab+c-a1}. We can then uniquely, up to isotopy, add an edge $ca$ to the starting vertex $a$, as on the left in Figure~\ref{figure:ca-b+c-a}, which produces the four-paths corresponding to the subword $ca_-b_+c_-a$. Furthermore, up to isotopy and a Reidemeister move, we can uniquely add an edge $bc$ to the starting vertex $c$, as on the right in Figure~\ref{figure:ca-b+c-a}. We get the five-paths corresponding to the subword $bc_+a_-b_+c_-a$.
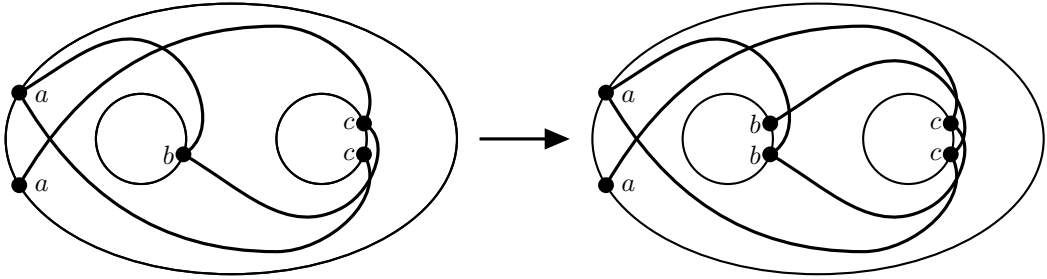
\begin{figure}[h]
\centering
\begin{tikzpicture}[scale=0.6,>=triangle 45]
\draw[thick]  (0,0) ellipse (5 and 3); \draw[thick]  (-2,0) circle (1); \draw[thick]  (2,0) circle (1);
\foreach \y in {0,13} {
\draw[thick]  (\y,0) ellipse (5 and 3); \draw[thick]  (\y-2,0) circle (1); \draw[thick]  (\y+2,0) circle (1);
\node[coordinate] (A1) at ({\y + 5*cos((8*pi/9) r)},{3*sin((8*pi/9) r)}) [label={[xshift=0.3cm, yshift=-0.25cm]:$a$}] {};
\node[coordinate] (B1) at ({\y -2+cos((-pi/9) r)},{sin((-pi/9) r)}) [label=180:$b$] {};
\node[coordinate] (C1) at ({\y + 2+cos((pi/9) r)},{sin((pi/9) r)}) [label=180:$c$] {};
\node[coordinate] (A2) at ({\y + 5*cos((10*pi/9) r)},{3*sin((10*pi/9) r)}) [label={[xshift=0.3cm, yshift=-0.2cm]:$a$}] {};
\node[coordinate] (C2) at ({\y+2+cos((-pi/9) r)},{sin((-pi/9) r)}) [label=180:$c$] {};
\draw [very thick] (A1) to[out=30,in=150] (\y -1.5,2) to[out=-30,in=30] (B1);
\draw [very thick] (B1) to[out=-30,in=-150] (\y + 2.5,-1.5) to[out=30,in=-30] (C1);
\draw [very thick] (A2) to[out=60,in=180] (\y + 1,2.5) to[out=0,in=60] (C1);
\draw [very thick] (A1) to[out=-60,in=180] (\y+1,-2.5) to[out=0,in=-60] (C2);
\foreach \x in {A1,A2,B1,C1,C2} {\fill (\x) circle (5.0pt);}
}
\node[coordinate] (B2) at ({13 -2+cos((pi/9) r)},{sin((pi/9) r)}) [label=180:$b$] {};
\draw [very thick] (B2) to[out=30,in=150] (13+2.5,1.5) to[out=-30,in=30] (C2); \fill (B2) circle (5.0pt);
\draw[->, very thick] (5.5,0) -- (7.5,0);
\end{tikzpicture}
\caption{The four-path paths $ca_-b_+c_-a$ and the five-path $bc_+a_-b_+c_-a$.}
\label{figure:ca-b+c-a}
\end{figure}
One possibility for completing the cycle would be to now join the degree one vertices $a$ and $b$ of the five-path by an edge. This can be done uniquely up to isotopy and produces an irreducible pants thrackle drawing of a six-cycle corresponding to the word $W=b_-c_+a_-b_+c_-a_+$, as in Figure~\ref{figure:sixcycle}. Any other such drawing is equivalent to that up to isotopy and Reidemeister moves (which were possible at the intermediate steps of our construction).

Otherwise, we can extend the five-path to a six-path corresponding to the subword $ab_-c_+a_-b_+c_-a$ by adding an edge $ab$ at the start. The resulting six-path is equivalent, up to isotopy and Reidemeister moves, to the one on the left in Figure~\ref{figure:abcabca}.
\begin{figure}[h]
\centering
\begin{tikzpicture}[scale=0.6,>=triangle 45]
\foreach \y in {0,13} {
\draw[thick]  (\y,0) ellipse (5 and 3); \draw[thick]  (\y-2,0) circle (1); \draw[thick]  (\y+2,0) circle (1);
\node[coordinate] (A1) at ({\y + 5*cos((8*pi/9) r)},{3*sin((8*pi/9) r)}) [label={[xshift=0.3cm, yshift=-0.25cm]:$a$}] {};
\node[coordinate] (B1) at ({\y -2+cos((-pi/9) r)},{sin((-pi/9) r)}) [label=180:$b$] {};
\node[coordinate] (C1) at ({\y + 2+cos((pi/9) r)},{sin((pi/9) r)}) [label=180:$c$] {};
\node[coordinate] (A2) at ({\y + 5*cos((10*pi/9) r)},{3*sin((10*pi/9) r)}) [label={[xshift=0.3cm, yshift=-0.2cm]:$a$}] {};
\node[coordinate] (C2) at ({\y+2+cos((-pi/9) r)},{sin((-pi/9) r)}) [label=180:$c$] {};
\node[coordinate] (B2) at ({\y -2+cos((pi/9) r)},{sin((pi/9) r)}) [label=180:$b$] {};
\node[coordinate] (A3) at ({\y - 5},0) [label={[xshift=0.3cm, yshift=-0.25cm]:$a$}] {};
\draw [very thick] (B1) to[out=-30,in=-150] (\y + 2.5,-1.5) to[out=30,in=-30] (C1);
\draw [very thick] (A2) to[out=60,in=180] (\y + 1,2.5) to[out=0,in=60] (C1);
\draw [very thick] (B2) to[out=30,in=150] (\y+2.5,1.5) to[out=-30,in=30] (C2);
\draw [very thick] (A1) to[out=30,in=150] (\y -1.5,2) to[out=-30,in=30] (B1);
    \draw [very thick] (A1) to[out=-60,in=180] (\y+1,-2.5) to[out=0,in=-60] (C2);
    \draw [very thick] (A3) to[out=-30,in=-150] (\y-1.5,-2) to[out=30,in=-30] (B2);
\foreach \x in {A1,A2,A3,B1,B2,C1,C2} {\fill (\x) circle (5.0pt);}
}
\node[coordinate] (X) at (13,-1.5) {}; \draw [thick] ($ (X) + (-0.02,-0.15) $) circle (4.0pt);
\draw [very thick] (A3) to[out=45,in=180] (11,1.5) to[out=0,in=75] (X);
\draw[->, very thick] (5.5,0) -- (7.5,0);
\end{tikzpicture}
\caption{The six-path $ab_-c_+a_-b_+c_-a$ cannot be extended to a seven-path $ca_+b_-c_+a_-b_+c_-a$.}
\label{figure:abcabca}
\end{figure}
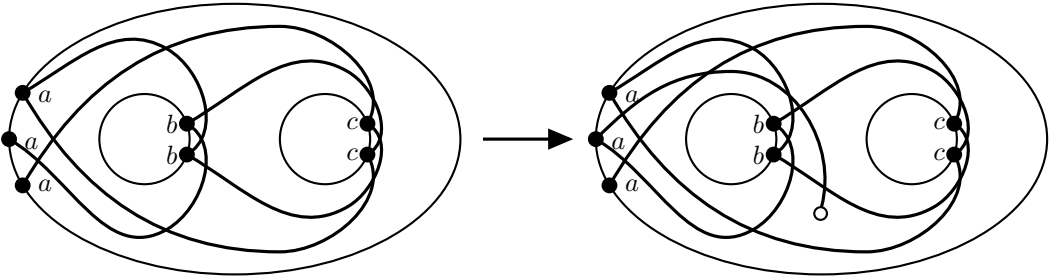
But then no edge $ca$ (with the correct orientation at $a$) can be added at the start of the six-path: up to isotopy and Reidemeister moves, the only edge we can add does not start at $c$, as on the right in Figure~\ref{figure:abcabca}. This completes the proof of the Proposition. %Lemma~\ref{l:redpants}
\end{proof}
}

By the Proposition, if an even cycle of length greater than $6$ has a pants thrackle drawing, then it must be reducible. Hence, to prove assertion~\eqref{it:pantseven} of Theorem~\ref{t:pants}, it suffices to show that the pants thrackle drawing of the six-cycle in Figure~\ref{figure:sixcycle} (or Reidemeister equivalent to it) admits no edge insertion such that the resulting thrackle drawing of the eight-cycle is again a pants thrackle drawing. One possible way is to consider all edge insertions following the procedure in Section~\ref{ss:ir}. But as the resulting thrackles are sufficiently small, all these cases can be treated by computer. Using the algorithm given in the end of Section~3 of \cite{MNajc} we found that up to isotopy and Reidemeister moves, there exist exactly three thrackled eight-cycles; they are shown in Figure~\ref{figure:alleight}. Each of them is obtained by edge insertion in a thrackled six-cycle and belongs to class $T_4$, but none of them is a pants thrackle.
\begin{figure}[h]
\centering
\begin{tikzpicture}[scale=0.6,>=triangle 45]
\foreach \y in {0,8,16} {
\begin{scope}[rotate=90]
\coordinate (A1) at ({5*cos((10*pi/9) r)},{\y+3*sin((10*pi/9) r)});
\coordinate (A2) at ({5*cos((8*pi/9) r)},{\y + 3*sin((8*pi/9) r)});
\coordinate (B1) at ({-2+cos((-pi/5) r)},{\y + sin((-pi/5) r)});
\coordinate (B2) at ({-2+cos((pi/5) r)},{\y + sin((pi/5) r)});
\coordinate (C2) at ({1.3+cos((pi/5) r)},{\y+sin((pi/5) r)});
\coordinate (C1) at ({1.3+cos((-pi/5) r)},{\y+sin((-pi/5) r)});
\foreach \x in {A1,A2,B1,B2,C1,C2} {\fill (\x) circle (5.0pt);}
\draw [very thick] (A1) to[out=-30,in=-150] ({-1.5},\y-2) to[out=30,in=-30] (B2);
\draw [very thick] (B1) to[out=-30,in=-150] ({2.5},\y-1.5) to[out=30,in=-30] (C2);
\draw [very thick] (B2) to[out=30,in=150] ({2.5},\y+1.5) to[out=-30,in=30] (C1);
\draw [very thick] (A1) to[out=60,in=180] ({1},\y+2.5) to[out=0,in=60] (C2);
\draw [very thick] (A2) to[out=30,in=150] ({-1.5},\y+2) to[out=-30,in=30] (B1);
\ifthenelse{\y = 16}
    {\node[coordinate] (A21) at ($(A2)+(0.7,0.1)$) {}; \fill (A21) circle (5.0pt);
    \coordinate (C12) at ({1.3+cos((-pi/5) r)},\y); \fill (C12) circle (5.0pt);
    \draw [very thick] (A2) to [out=-60,in=210] (2,\y-2.2) to [out=25,in=-60] (C12);
    \draw [very thick] (A21) to [out=105,in=120] ($(A2)+(-0.4,-0.2)$) to [out=-60,in=180] (0.8,\y-3) to [out=0,in=-90] (C1);
    \draw [very thick] (A21) to [out=105,in=120] ($(A2)+(-0.6,-0.2)$) to [out=-60,in=180] (0.8,\y-3.2) to [out=0,in=-90] ($(C1)+(0.6,-0.4)$) to [out=90,in=30] (C12);
    }
    {
    \ifthenelse{\y = 8}
    {\node[coordinate] (A21) at ($(A2)+(0.2,0.5)$) {}; \fill (A21) circle (5.0pt);
    \coordinate (C12) at ($(C1)+ (0.5,0)$); \fill (C12) circle (5.0pt);
    \draw [very thick] (A21) to [out=-60,in=210] (2.2,\y-2.2) to [out=25,in=0] (C1);
    \draw [very thick] (A2) to [out=-60,in=200] (0.6,\y-1.5) to [out=20,in=180] ($0.5*(C1)+0.5*(C2)$) to [out=0,in=-90] (C12);
    \draw [very thick] (A21) to [out=-40,in=200] (0.6,\y-1.1) to [out=20,in=180] ($0.5*(C1)+0.5*(C2)+(0,0.3)$) to [out=0,in=-100] (C12);
    }
    {\node[coordinate] (A21) at ($(A2)+(0.2,0.5)$) {}; \fill (A21) circle (5.0pt);
    \coordinate (C12) at (0.5,2); \fill (C12) circle (5.0pt);
    \draw [very thick] (A21) to [out=-60,in=210] (2.2,\y-2.2) to [out=25,in=0] (C1);
    \draw [very thick] (A2) to [out=-60,in=200] (0.6,\y-1.5) to [out=20,in=-40] (C12);
    \draw [very thick] (A21) to [out=-40,in=200] (0.6,\y-1.1) to [out=20,in=-60] (C12);
    }
    }
  ;
\end{scope}
}
\end{tikzpicture}
\caption{All thrackled eight-cycles up to Reidemeister equivalency.}
\label{figure:alleight}
\end{figure}
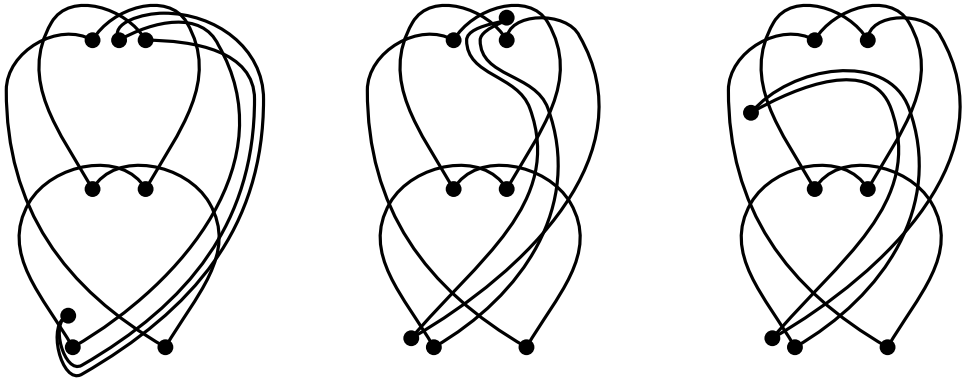
This proves assertion~\eqref{it:pantseven} of Theorem~\ref{t:pants}.

It remains to prove assertion~\eqref{it:pantsC}. By Lemma~\ref{l:TCTd}, it suffices to show that if $G$ is either a theta-graph or a dumbbell, then it admits no pants thrackle drawing. We also know from Lemma~\ref{l:TCTd} that in both cases, $G$ contains an even cycle which by assertion~\eqref{it:pantseven} must be a six-cycle whose thrackle drawing is Reidemeister equivalent to the one in Figure~\ref{figure:sixcycle}. % to prove assertion~\eqref{it:pantsC} According to

The proof goes as follows: we explicitly construct pants thrackle drawings of a six-cycle with certain small trees attached to one of its vertices and first show that in a pants thrackle drawing of a three-path attached to a six-cycle, the drawing of the three-path is reducible. Repeatedly performing edge removals we get a pants thrackle drawing either of a theta-graph obtained from a six-cycle by joining two of its vertices by a path of length at most $2$, or of a dumbbell consisting of a six-cycle and some other cycle joined by a path of length at most $2$. The resulting theta-graphs are very small, and from \cite{FP2011, MNajc} we know that they admit no thrackle drawing at all, and in particular, no pants thrackle drawing (the latter fact will also be confirmed in the course of the proof). Every resulting dumbbell contains one of two subgraphs obtained from the six-cycle by attaching a small tree, as in Figure~\ref{figure:sixtree}.
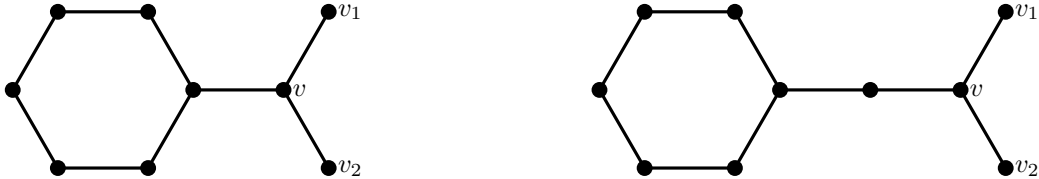
\begin{figure}[h]
\centering
\begin{tikzpicture}[scale=0.6,>=triangle 45]
\foreach \y in {0,13} {
\node[coordinate] (A1) at (\y + 2,0) {};
\node[coordinate] (A2) at ({\y + 2*cos((pi/3) r)},{2*sin((pi/3) r)}) {};
\node[coordinate] (A3) at ({\y + 2*cos((2*pi/3) r)},{2*sin((2*pi/3) r)}) {};
\node[coordinate] (A4) at ({\y - 2},0) {};
\node[coordinate] (A5) at ({\y + 2*cos((-2*pi/3) r)},{2*sin((-2*pi/3) r)}) {};
\node[coordinate] (A6) at ({\y + 2*cos((-pi/3) r)},{2*sin((-pi/3) r)}) {};
\foreach \x in {A1,A2,A3,A4,A5,A6} {\fill (\x) circle (5.0pt);}
\draw [very thick] (A1) -- (A2) -- (A3) -- (A4) -- (A5) -- (A6) -- (A1);
    \ifthenelse{\y = 0}
    {\node[coordinate] (B1) at ({\y + 4},0) [label=0:$v$] {}; \fill (B1) circle (5.0pt);
    \node[coordinate] (C1) at ({\y + 4 + 2*cos((pi/3) r)},{2*sin((pi/3) r)}) [label=0:$v_1$] {}; \fill (C1) circle (5.0pt);
    \node[coordinate] (C2) at ({\y + 4 + 2*cos((-pi/3) r)},{2*sin((-pi/3) r)})  [label=0:$v_2$] {}; \fill (C2) circle (5.0pt);
    \draw [very thick] (C1) -- (B1) -- (C2); \draw [very thick] (A1) -- (B1);
    }
    {\node[coordinate] (B1) at ({\y + 4},0) {}; \fill (B1) circle (5.0pt);
    \node[coordinate] (B2) at ({\y + 6},0) [label=0:$v$] {}; \fill (B2) circle (5.0pt);
    \node[coordinate] (C1) at ({\y + 6 + 2*cos((pi/3) r)},{2*sin((pi/3) r)}) [label=0:$v_1$] {}; \fill (C1) circle (5.0pt);
    \node[coordinate] (C2) at ({\y + 6 + 2*cos((-pi/3) r)},{2*sin((-pi/3) r)})  [label=0:$v_2$] {}; \fill (C2) circle (5.0pt);
    \draw [very thick] (C1) -- (B2) -- (C2); \draw [very thick] (A1) -- (B1) -- (B2);
    }
}
\end{tikzpicture}
\caption{The six-path cycle with a tree attached.}
\label{figure:sixtree}
\end{figure}
We show that for a pants thrackle drawing of each of these two subgraphs, to at least one of the two vertices $v_1, v_2$, it is not possible to attach another edge so that the resulting drawing is a pants thrackle drawing.

We start with the pants thrackle drawing of the six-cycle and attach a path to one of its vertices. By cyclic symmetry, we can choose any vertex to attach a path. Moreover, from the arguments in Section~\ref{ss:R} it follows that Reidemeister moves on the original six-cycle and on the intermediate steps of adding edges will result in a Reidemeister equivalent drawing in the end. So we can attach a path edge-by-edge choosing one of Reidemeister equivalent drawings arbitrarily at each step.

Up to isotopy and Reidemeister moves, there are two ways to attach an edge to a vertex of the drawing of the six-cycle, as in Figure~\ref{figure:sixplusone}. Note that the second endpoint of this edge is not one of the vertices of the six-cycle (so that no theta-graph obtained by joining two vertices of a six-cycle by an edge admits a pants thrackle drawing) and that in the two cases shown in Figure~\ref{figure:sixplusone}, it lies on different boundary components of $P$. It follows that renaming $b$ and $c$ and changing the direction on the cycle and the orientation on the plane, we obtain two Reidemeister equivalent drawings. We continue with the one on the left in Figure~\ref{figure:sixplusone} and attach another edge at the vertex of degree $1$.
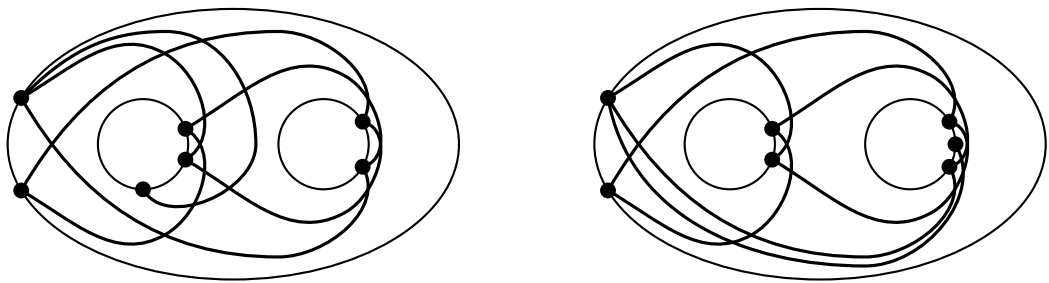
\begin{figure}[h]
\centering
\begin{tikzpicture}[scale=0.6,>=triangle 45]
\foreach \y in {0,13} {
\draw[thick]  (\y,0) ellipse (5 and 3); \draw[thick]  (\y-2,0) circle (1); \draw[thick]  (\y+2,0) circle (1);
\coordinate (A1) at ({\y+5*cos((10*pi/9) r)},{3*sin((10*pi/9) r)});
\node[coordinate] (A2) at ({\y + 5*cos((8*pi/9) r)},{3*sin((8*pi/9) r)}) {};
\node[coordinate] (B1) at ({\y -2+cos((-pi/9) r)},{sin((-pi/9) r)}) {};
\coordinate (B2) at ({\y-2+cos((pi/9) r)},{sin((pi/9) r)});
\coordinate (C2) at ({\y+2+cos((pi/6) r)},{sin((pi/6) r)});
\coordinate (C1) at ({\y+2+cos((-pi/6) r)},{sin((-pi/6) r)});
\foreach \x in {A1,A2,B1,B2,C1,C2} {\fill (\x) circle (5.0pt);}
\draw [very thick] (A1) to[out=-30,in=-150] ({\y-1.5},-2) to[out=30,in=-30] (B2);
\draw [very thick] (B1) to[out=-30,in=-150] ({\y+2.5},-1.5) to[out=30,in=-15] (C2);
\draw [very thick] (A2) to[out=-60,in=180] ({\y+1},-2.5) to[out=0,in=-60] (C1);
\draw [very thick] (A2) to[out=30,in=150] ({\y-1.5},2) to[out=-30,in=30] (B1);
\draw [very thick] (B2) to[out=30,in=150] ({\y+2.5},1.5) to[out=-30,in=15] (C1);
\draw [very thick] (A1) to[out=60,in=180] ({\y+1},2.5) to[out=0,in=60] (C2);
\ifthenelse{\y = 0}
    {
    \coordinate (B3) at ({\y-2},-1); \fill (B3) circle (5.0pt);
    \draw [very thick] (A2) to [out=45,in=180] ({\y-1.5},2.5) to [out=0,in=90] ({\y+0.5},0) to [out=-90,in=-60] (B3);
    }
    {
    \coordinate (C3) at ({\y+3},0); \fill (C3) circle (5.0pt);
    \draw [very thick] (A2) to[out=-80,in=180] ({\y+1},-2.7) to[out=0,in=-60] (C3);
    }
  ;
}
\end{tikzpicture}
\caption{Six-cycle with an edge attached.}
\label{figure:sixplusone}
\end{figure}
This can be done uniquely up to isotopy and Reidemeister equivalence resulting in the drawing as on the left in Figure~\ref{figure:sixplustwo}. Again, the second endpoint of the attached edge cannot be one of the vertices of the six-cycle (so that no theta-graph obtained by joining two vertices of a six-cycle by a two-path admits a pants thrackle drawing). Then we can attach another edge at that vertex. This can be done uniquely up to isotopy and Reidemeister equivalence, as on the right in Figure~\ref{figure:sixplustwo}. %, and the second endpoint cannot be one of the vertices of the six-cycle which shows that no theta-graph obtained by joining two vertices of a six-cycle by a three-path admits a pants thrackle drawing)
\begin{figure}[h]
\centering
\begin{tikzpicture}[scale=0.6,>=triangle 45]
\foreach \y in {0,13} {
\draw[thick]  (\y,0) ellipse (5 and 3); \draw[thick]  (\y-2,0) circle (1); \draw[thick]  (\y+2,0) circle (1);
\coordinate (A1) at ({\y+5*cos((10*pi/9) r)},{3*sin((10*pi/9) r)});
\node[coordinate] (A2) at ({\y + 5*cos((8*pi/9) r)},{3*sin((8*pi/9) r)}) {};
\node[coordinate] (B1) at ({\y -2+cos((-pi/9) r)},{sin((-pi/9) r)}) {};
\coordinate (B2) at ({\y-2+cos((pi/9) r)},{sin((pi/9) r)});
\coordinate (C2) at ({\y+2+cos((pi/6) r)},{sin((pi/6) r)});
\coordinate (C1) at ({\y+2+cos((-pi/6) r)},{sin((-pi/6) r)});
\foreach \x in {A1,A2,B1,B2,C1,C2} {\fill (\x) circle (5.0pt);}
\draw [very thick] (A1) to[out=-30,in=-150] ({\y-1.5},-2) to[out=30,in=-30] (B2);
\draw [very thick] (B1) to[out=-30,in=-150] ({\y+2.5},-1.5) to[out=30,in=-15] (C2);
\draw [very thick] (A2) to[out=-60,in=180] ({\y+1},-2.5) to[out=0,in=-60] (C1);
\draw [very thick] (A2) to[out=30,in=150] ({\y-1.5},2) to[out=-30,in=30] (B1);
\draw [very thick] (B2) to[out=30,in=150] ({\y+2.5},1.5) to[out=-30,in=15] (C1);
\draw [very thick] (A1) to[out=60,in=180] ({\y+1},2.5) to[out=0,in=60] (C2);
\node[coordinate] (B3) at ({\y-2+cos((pi/3) r)},{sin((-pi/3) r)}) {}; \fill (B3) circle (5.0pt);
\draw [very thick] (A2) to [out=45,in=180] ({\y-1.5},2.5) to [out=0,in=90] ({\y+0.5},0) to [out=-90,in=-60] (B3);
\coordinate (A3) at ({\y+5*cos((19*pi/18) r)},{3*sin((19*pi/18) r)}); \fill (A3) circle (5.0pt);
\draw [very thick] (A3) to [out=45,in=180] ({\y-1.5},2.2) to [out=0,in=90] ({\y+0.2},0) to [out=-90,in=-30] (B3);
\ifthenelse{\y = 0}{}
    {
    \coordinate (B4) at ({\y-2+cos((-pi/2-pi/9) r)},{sin((-pi/2-pi/9) r)});\fill (B4) circle (5.0pt);
    \draw [very thick] (A3) to [out=60,in=180] ({\y-1.2},2.7) to [out=0,in=90] ({\y+0.7},0) to [out=-90,in=-45] (B4);
    }
  ;
}
\draw[->, very thick] (5.5,0) -- (7.5,0);
\end{tikzpicture}
\caption{Six-cycle with two- and three-paths attached.}
\label{figure:sixplustwo}
\end{figure}
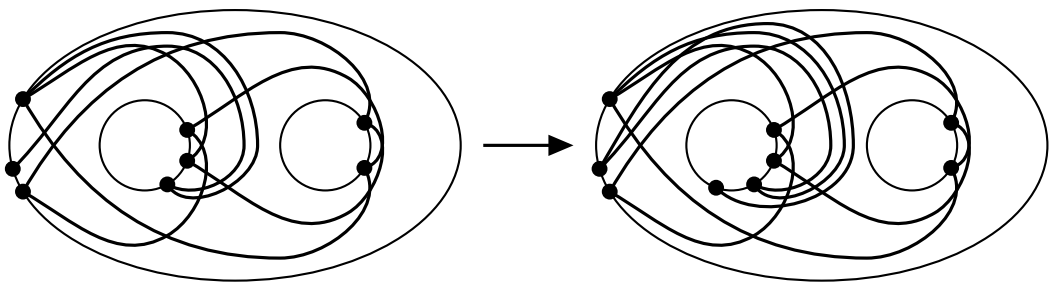
But if the two vertices other than the endpoints in the so attached three-path have degree $2$ in $G$, then the three-path is reducible by Lemma~\ref{lemma:triangle}. Now if $G$ is a theta-graph, then by repeatedly performing edge removals we obtain a pants thrackle drawing of a theta-graph obtained by joining two vertices of a six-cycle by a path of length at most $2$, which is impossible, as we have shown above. If $G$ is a dumbbell, then by repeatedly performing edge removals we obtain a pants thrackle drawing of a dumbbell consisting of the six-cycle and a cycle $C'$, with a vertex of the six-cycle joined to the vertex $v$ of $C'$ by either an edge or a two-path. Such a dumbbell contains one of the two subgraphs given in Figure~\ref{figure:sixtree}. So it remains to deal with these two cases.

The vertex $v$ has degree $3$ in $G$, so we have to attach two edges to it. In the first case, we start with the drawing on the left in Figure~\ref{figure:sixplusone} and attach two edges to the vertex $v$. We obtain a unique drawing, up to isotopy and Reidemeister moves, as on the right in Figure~\ref{figure:sixplusonev}. But then no edge can be attached to the vertex $a_2$ in such a way that the resulting drawing is a pants thrackle drawing.
\begin{figure}[h]
\centering
\begin{tikzpicture}[scale=0.6,>=triangle 45]
\foreach \y in {0,14} {
\draw[thick]  (\y,0) ellipse (5 and 3); \draw[thick]  (\y-2,0) circle (1); \draw[thick]  (\y+2,0) circle (1);
\coordinate (A1) at ({\y+5*cos((10*pi/9) r)},{3*sin((10*pi/9) r)});
\node[coordinate] (A2) at ({\y + 5*cos((8*pi/9) r)},{3*sin((8*pi/9) r)}) {};
\node[coordinate] (B1) at ({\y -2+cos((-pi/9) r)},{sin((-pi/9) r)}) {};
\coordinate (B2) at ({\y-2+cos((pi/9) r)},{sin((pi/9) r)});
\coordinate (C2) at ({\y+2+cos((pi/6) r)},{sin((pi/6) r)});
\coordinate (C1) at ({\y+2+cos((-pi/6) r)},{sin((-pi/6) r)});
\foreach \x in {A1,A2,B1,B2,C1,C2} {\fill (\x) circle (5.0pt);}
\draw [very thick] (A1) to[out=-30,in=-150] ({\y-1.5},-2) to[out=30,in=-30] (B2);
\draw [very thick] (B1) to[out=-30,in=-150] ({\y+2.5},-1.5) to[out=30,in=-15] (C2);
\draw [very thick] (A2) to[out=-60,in=180] ({\y+1},-2.5) to[out=0,in=-60] (C1);
\draw [very thick] (A2) to[out=30,in=150] ({\y-1.5},2) to[out=-30,in=30] (B1);
\draw [very thick] (B2) to[out=30,in=150] ({\y+2.5},1.5) to[out=-30,in=15] (C1);
\draw [very thick] (A1) to[out=60,in=180] ({\y+1},2.5) to[out=0,in=60] (C2);
\node[coordinate] (B3) at ({\y-2},-1) [label=90:$v$] {}; \fill (B3) circle (5.0pt);
\draw [very thick] (A2) to [out=45,in=180] ({\y-1.5},2.5) to [out=0,in=90] ({\y+0.5},0) to [out=-90,in=-60] (B3);
\ifthenelse{\y = 0}
    {}
    {\node[coordinate] (A3) at ({\y+5*cos((19*pi/18) r)},{3*sin((19*pi/18) r)}) [label=180:$a_1$] {}; \fill (A3) circle (5.0pt);
    \draw [very thick] (A3) to [out=45,in=180] ({\y-1.5},1.8) to [out=0,in=90] ({\y-0.3},0) to [out=-90,in=0] (B3);
    \node[coordinate] (A4) at ({\y+5*cos((35*pi/36) r)},{3*sin((35*pi/36) r)}) [label=180:$a_2$] {}; \fill (A4) circle (5.0pt);
    \draw [very thick] (A4) to [out=45,in=180] ({\y-1.5},2.1) to [out=0,in=90] ({\y},0) to [out=-90,in=-40] (B3);
    }
  ;
}
\draw[->, very thick] (5.75,0) -- (7.75,0);
\end{tikzpicture}
\caption{Pants drawing of the graph on the left in Figure~\ref{figure:sixtree}.} %The endpoint of an edge attached to the six-cycle has degree $3$
\label{figure:sixplusonev}
\end{figure}
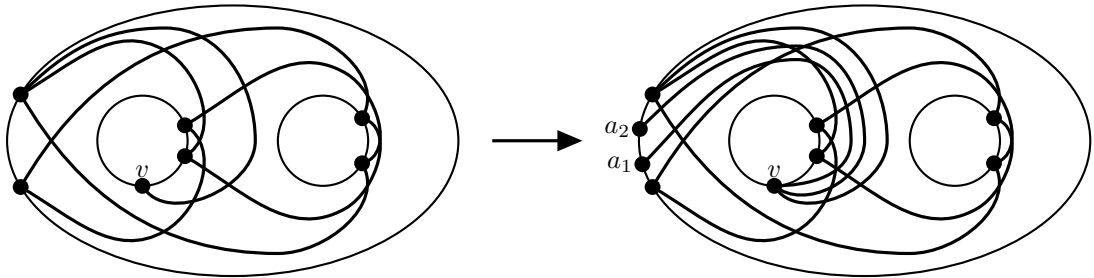

Similarly, in the second case, we start with the drawing on the left in Figure~\ref{figure:sixplustwo} and attach two edges to the vertex $v$. We obtain a unique drawing, up to isotopy and Reidemeister moves, as on the right in Figure~\ref{figure:sixplustwov}. But then no edge can be attached to the vertex $b_1$ in such a way that the resulting drawing is a pants thrackle drawing.

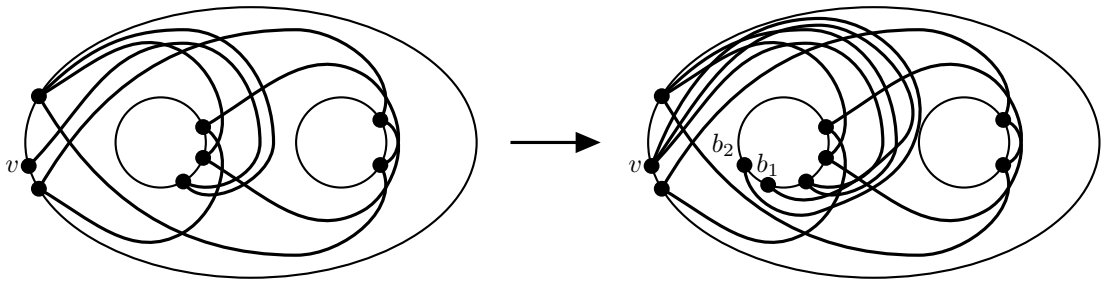
\begin{figure}[h]
\centering
\begin{tikzpicture}[scale=0.6,>=triangle 45]
\foreach \y in {0,13.8} {
\pgfmathparse{abs(\y) < 0.001 ? int(1) : int(0)}
\draw[thick]  (\y,0) ellipse (5 and 3); \draw[thick]  (\y-2,0) circle (1); \draw[thick]  (\y+2,0) circle (1);
\coordinate (A1) at ({\y+5*cos((10*pi/9) r)},{3*sin((10*pi/9) r)});
\node[coordinate] (A2) at ({\y + 5*cos((8*pi/9) r)},{3*sin((8*pi/9) r)}) {};
\node[coordinate] (B1) at ({\y -2+cos((-pi/9) r)},{sin((-pi/9) r)}) {};
\coordinate (B2) at ({\y-2+cos((pi/9) r)},{sin((pi/9) r)});
\coordinate (C2) at ({\y+2+cos((pi/6) r)},{sin((pi/6) r)});
\coordinate (C1) at ({\y+2+cos((-pi/6) r)},{sin((-pi/6) r)});
\foreach \x in {A1,A2,B1,B2,C1,C2} {\fill (\x) circle (5.0pt);}
\draw [very thick] (A1) to[out=-30,in=-150] ({\y-1.5},-2) to[out=30,in=-30] (B2);
\draw [very thick] (B1) to[out=-30,in=-150] ({\y+2.5},-1.5) to[out=30,in=-15] (C2);
\draw [very thick] (A2) to[out=-60,in=180] ({\y+1},-2.5) to[out=0,in=-60] (C1);
\draw [very thick] (A2) to[out=30,in=150] ({\y-1.5},2) to[out=-30,in=30] (B1);
\draw [very thick] (B2) to[out=30,in=150] ({\y+2.5},1.5) to[out=-30,in=15] (C1);
\draw [very thick] (A1) to[out=60,in=180] ({\y+1},2.5) to[out=0,in=60] (C2);
\node[coordinate] (B3) at ({\y-2+cos((pi/3) r)},{sin((-pi/3) r)}) {}; \fill (B3) circle (5.0pt);
\draw [very thick] (A2) to [out=45,in=180] ({\y-1.5},2.5) to [out=0,in=90] ({\y+0.5},0) to [out=-90,in=-60] (B3);
\node[coordinate] (A3) at ({\y+5*cos((19*pi/18) r)},{3*sin((19*pi/18) r)}) [label=180:$v$] {}; \fill (A3) circle (5.0pt);
\draw [very thick] (A3) to [out=45,in=180] ({\y-1.5},2.2) to [out=0,in=90] ({\y+0.2},0) to [out=-90,in=-30] (B3);
%\ifthenelse{\y = 0}{}
\ifnum\pgfmathresult=0
    {
    \node[coordinate] (B4) at ({\y-2+cos((-pi/2-pi/9) r)},{sin((-pi/2-pi/9) r)}) [label=90:$b_1$] {};\fill (B4) circle (5.0pt);
    \draw [very thick] (A3) to [out=55,in=180] ({\y-1.2},2.6) to [out=0,in=45] ({\y+0.6},0) to [out=-135,in=-45] (B4);
    \node[coordinate] (B5) at ({\y-2+cos((-pi/2-3*pi/9) r)},{sin((-pi/2-3*pi/9) r)}) [label=135:$b_2$] {};\fill (B5) circle (5.0pt);
    \draw [very thick] (A3) to [out=70,in=180] ({\y-1.2},2.75) to [out=0,in=80] ({\y+1},0.4) to [out=-100,in=20] ({\y-1.25},-1.55) to [out=-160,in=-90] (B5);
    }
  \fi
}
\draw[->, very thick] (5.75,0) -- (7.75,0);
\end{tikzpicture}
\caption{Pants drawing of the graph on the right in Figure~\ref{figure:sixtree}.}% The endpoint of a two-path attached to the six-cycle has degree $3$
\label{figure:sixplustwov}
\end{figure}

%\newpage

This completes the proof of Theorem~\ref{t:pants}.

\acknowledgements
\label{sec:ack}
We express our deep gratitude to Grant Cairns for his generous contribution to this paper, at all the stages, from mathematics to presentation.

We are thankful to the reviewer for their kind permission to include a brief description of the ideas underlining the proof borrowed from their report.

\nocite{*}
%\bibliographystyle{abbrvnat}
% use the following instead if you encounter problems
\bibliographystyle{alpha}
\bibliography{cylnew}
\label{sec:biblio}

\end{document}